\newtheorem{thm}{Theorem}[section]
\newtheorem{prop}[thm]{Proposition}
\newtheorem{lem}[thm]{Lemma}
\newtheorem{conj}[thm]{Conjecture} 
\newtheorem{rmk}[thm]{Remark}
\newcommand{\vol}{\mathrm{Vol}}
\begin{document}

\title[Mirzakhani's frequencies of simple closed geodesics in large genus]{Mirzakhani's frequencies of simple closed geodesics on hyperbolic surfaces in large genus and with many cusps}
\author{Irene Ren}

\abstract{
We present a proof of a conjecture proposed by V. Delecroix, E.~Goujard, P. Zograf, and A. Zorich, which describes the large genus asymptotic behaviours of the ratio of frequencies of separating over nonseparating simple closed geodesics on a closed hyperbolic surface of genus $g$ with $n$ cusps. We explicitly give the function $f(\frac{n}{g})$ in the conjecture. The moderate behaviour of the frequencies with respect to the growth rate of the number of cusps compared to that of the genus drastically contrasts with the behaviour of other geometric quantities and exhibits the topological nature of the frequencies.
}

\maketitle

\tableofcontents

\section{Introduction}
\label{sec:intro}

    Let $X$ be a hyperbolic surface of genus $g$ with $n$ cusps. A closed geodesic is called simple if it does not have self-intersections. A simple closed geodesic is called separating if it splits the hyperbolic surface $X$ into two parts, and is called nonseparating if it does not.
    
    In 2008 in her paper \cite{Mirzakhani:2008ola}, M. Mirzakhani has proved that for any multicurve $\gamma$ and any hyperbolic surface $X$ in $\mathcal{M}_{g,n}$ the number $s_X (L,\gamma)$ of simple closed geodesics multicurves on $X$ of topological type $\gamma$ and of hyperbolic length at most $L$ has asymptotics:
    $$s_X (L,\gamma ) \sim \mu_{Th}(B_X) \frac{c(\gamma)}{b_{g,n}}\cdot L^{6g-6+2n}, \quad L \to \infty.$$
    Here the Thurston measure $\mu_{Th}(B_X)$ depends only on the hyperbolic metric of $X$, $b_{g,n}$ only depends on the topological properties of $X$. The frequency of geodesics with topology type $\gamma$, denoted $c(\gamma)$, only depends on the topology type of $\gamma$, and admits a formula in terms of the intersection numbers of $\psi$-classes.

    The frequency of separating simple closed geodesics, $c_{g,n,\mathrm{sep}}= \sum_{\gamma} c(\gamma)$, is defined as a summation over all topological types of separating simple closed curve $\gamma$. In nonseparating case, $c_{g,n,\mathrm{nonsep}} = c(\gamma)$, where $\gamma$ is the topological type of nonseparating simple closed curve. 

    M. Mirzakhani in \cite{Mirzakhani:2008ola} has explored the frequencies of different types of simple closed geodesics on hyperbolic surfaces of genus $g$ with $n$ cusps, and their relationship with the Weil-Petersson volume of moduli spaces of corresponding bordered Riemann surfaces. She proved that the ratio of frequencies, $\frac{c_{g,n,\mathrm{sep}}}{c_{g,n,\mathrm{nonsep}}}$ is a topological quantity, which is one and the same for all hyperbolic surfaces of genus $g$ and $n$ cusps, no matter the geometric structure of the surface.

    In \cite{DGZZ21}, V. Delecroix, E. Goujard, P. Zograf and A. Zorich related the count of asymptotic frequencies of simple closed geodesics on a hyperbolic surface with that of square-tiled surfaces through Witten–Kontsevich correlators. In a more recent paper \cite{dgzz23}, the same authors introduced a correspondence between squared-tiled surfaces and meanders. In the same paper, they discussed the frequencies of separating versus nonseparating simple closed geodesics on hyperbolic surfaces of a large genus $g$ with $n$ cusps. 
    
	In this paper we are dealing with a conjecture from \cite{dgzz23}, which describes the large $g$ asymptotic behaviour of the ratio of $c_{g,n,\mathrm{sep}}$ and $c_{g,n,\mathrm{nonsep}}$ on hyperbolic surfaces. This conjecture indicates the topological nature of the asymptotic frequencies of simple closed geodesics on hyperbolic surfaces of a large genus $g$ with $n$ cusps.

\subsection*{Acknowledgement}
I thank Anton Zorich for introducing this problem to me as well as reviewing the scripts. I thank Simon Barazer and Anton Zorich for useful discussions.

\section{Statement of the Conjecture}

\subsection{The Conjecture}

V. Delecroix, E. Goujard, P. Zograf and A. Zorich has proved in \cite{DGZZ21} that $\frac{c_{g,0,sep}}{c_{g,0,nonsep}} \sim \sqrt{\frac{2}{3\pi g}} \cdot \frac{1}{4^g}$, as $g \to \infty$. Later they accordingly conjectured in \cite{dgzz23} that for larger number of cusps, the ratio should be: \newline

\begin{conj} \label{thm:conjMain}
(Conjecture 2.16 in \cite{dgzz23})
The ratio of frequencies of separating over nonseparating simple closed geodesics on a closed hyperbolic surface of genus g with n cusps admits the following uniform asymptotics:
\begin{equation}
    \frac{c_{g,n,\mathrm{sep}}}{c_{g,n,\mathrm{nonsep}} }=\sqrt{\frac{2}{3\pi g}} \cdot \frac{1}{4^g} \cdot f\left(\frac{n}{g}\right) \cdot \left(1+\varepsilon(g,n) \right),
\end{equation}
where the function $f:[0,\infty) \mapsto \mathbb{R}$ is continuous and increases monotonously from $f(0) = 1$ to $f(\infty) = \sqrt{2}$ and the error term $\varepsilon(g,n)$ tends to $0$ as $g \rightarrow \infty$ uniformly in $n$.
\end{conj}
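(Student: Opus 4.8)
The plan is to turn the ratio into a purely combinatorial quantity and analyse it by concentration plus saddle point. By Mirzakhani's topological–invariance result and the correspondence of \cite{DGZZ21} with Witten–Kontsevich correlators, each frequency equals the top-degree coefficient of the Weil–Petersson volume polynomial of the surface cut along the relevant curve, hence a sum of $\psi$-intersection numbers $\langle\tau_{d_1}\cdots\tau_{d_k}\rangle_h$. Cutting along a nonseparating curve gives $\mathcal M_{g-1,n+2}$, so the denominator is governed by $D=\sum_{a+b=3g-4+n}\langle\tau_a\tau_b\tau_0^{n}\rangle_{g-1}/(a!\,b!)$, while cutting along a separating curve gives $\mathcal M_{g_1,n_1+1}\times\mathcal M_{g_2,n_2+1}$, so the numerator is $\sum\binom{n}{n_1}\langle\tau_{m_1}\tau_0^{n_1}\rangle_{g_1}\langle\tau_{m_2}\tau_0^{n_2}\rangle_{g_2}/(m_1!\,m_2!)$ with $g_1+g_2=g$, $n_1+n_2=n$, $m_i=3g_i-2+n_i$. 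All factors common to the two frequencies (the Thurston volume $b_{g,n}$, the length-integration factor, and the common $2^{-(3g-4+n)}$) cancel, leaving only these correlator sums.

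First I would feed in the large-genus correlator asymptotics $\langle\tau_{d_1}\cdots\tau_{d_k}\rangle_h\approx(6h+2k-5)!!/(24^{h}\,h!\,\prod_i(2d_i+1)!!)$. Using the identity $\frac{1}{d!\,(2d+1)!!}=\frac{2^{d}}{(2d+1)!}$ together with $\sum_{j+k=2M,\ j,k\ \mathrm{odd}}\binom{2M}{j}=2^{2M-1}$, the denominator $D$ telescopes to a closed form, while the single-$\psi$ correlators in the numerator collapse to $\langle\tau_{m_i}\tau_0^{n_i}\rangle_{g_i}\approx 1/(24^{g_i}g_i!)$, giving a numerator proportional to $24^{-g}\sum\binom{n}{n_1}(g_1!\,g_2!\,m_1!\,m_2!)^{-1}$. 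The products of factorials force this sum to concentrate at the balanced splitting $g_1\approx g/2$, $n_1\approx n/2$; it is precisely this concentration, through central binomial coefficients, that produces the exponential suppression $4^{-g}$ that is absent from the nonseparating normalisation.

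Next I would evaluate the balanced peak by a two-dimensional Laplace analysis: expanding the logarithm of the summand to second order in $(g_1-g/2,\,n_1-n/2)$ yields a Gaussian whose covariance matrix depends on $(g,n)$ only through the ratio $n/g$. The Gaussian sum supplies the $g^{-1/2}$ factor, and after dividing by the closed-form denominator one reads off the prefactor $\sqrt{2/(3\pi g)}\cdot 4^{-g}$ together with an explicit profile $f(n/g)$ given as a ratio of saddle-point determinants. The value $f(0)=1$ is then the $n=0$ specialisation, where the double sum degenerates to one dimension; I would verify $f(\infty)=\sqrt2$ in the cusp-dominated regime, where the cusp distribution decouples from the genus balance, and obtain monotonicity and continuity by differentiating the resulting closed expression in $x=n/g$.

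The hard part will be uniformity in $n$. The correlator asymptotics above are established only for $n$ small relative to $g$, whereas here $n$ may be comparable to or much larger than $g$; indeed the naive leading form alone does not reproduce the bounded limit $f(\infty)=\sqrt2$, so the genuine content of the theorem is a refinement of these estimates with error terms controlled \emph{uniformly} in $n$, including the regime $n\gg g$. I expect the cleanest route to be an exponential generating function in the number of cusps, which converts the cusp-distribution convolutions $\sum_{n_1}\binom{n}{n_1}(\cdots)$ into products and permits a single saddle-point treatment valid across all $n$; combined with uniform tail bounds showing that the non-balanced splittings are negligible, this is the step I expect to be the principal obstacle.
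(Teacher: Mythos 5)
Your overall strategy is the same as the paper's: convert both frequencies into sums of $\psi$-correlators via Mirzakhani's theorem and the Masur--Veech correspondence of \cite{DGZZ21}, then run a Laplace/Gaussian analysis centred at the balanced splitting and read off $f$ from the ratio of the resulting Gaussian prefactors. The concentration at $g_1\approx g/2$, the origin of the $g^{-1/2}$ factor, and the plan to extract $f$ as a ratio of saddle-point data are all correct in substance, and the limits $f(0)=1$, $f(\infty)=\sqrt2$ do come out of exactly this computation (the paper obtains $f(\lambda)=\sqrt{(6+2\lambda)/(6+\lambda)}$).

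The genuine gap is the step you yourself flag as the principal obstacle, and your proposed remedy is not the one that works. You plan to apply the $k$-point correlator asymptotics $\langle\tau_{d_1}\cdots\tau_{d_k}\rangle_h\approx(6h+2k-5)!!/(24^h h!\prod_i(2d_i+1)!!)$ directly to correlators carrying $\tau_0^{n}$ insertions; as you note, this estimate is uniform only when the number of insertions is small relative to $g$, so for $n\gtrsim g$ the argument collapses, and an exponential generating function in the cusps is not obviously enough to rescue it. The paper sidesteps the issue entirely: the $\tau_0$'s are removed \emph{exactly} before any asymptotics are taken. The one-point correlators in the numerator are exact, $\langle\tau_{3g_i-2+n_i}\tau_0^{n_i}\rangle_{g_i}=1/(24^{g_i}g_i!)$, and the sum over $n_1$ is then a Vandermonde convolution that collapses the two-dimensional sum to the closed one-dimensional form $\sum_{g_1}\binom{g}{g_1}\binom{3g-4+2n}{3g_1-2+n}$ (this is Proposition 6.4 of \cite{dgzz23}); for the denominator the string equation reduces everything to the two-point correlators $\langle\tau_k\tau_{3g-7-k}\rangle_{g-1}$, whose asymptotics are uniform in $k$ and carry no $n$-dependence at all. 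As a result the only error term is an $n$-independent $O(1/g)$, and ``uniformity in $n$'' reduces to checking that the one-dimensional Laplace estimates are uniform in the parameter $\lambda=n/g$ --- which is elementary. Your diagnosis that ``the genuine content of the theorem is a refinement of the correlator estimates uniformly in $n$'' is therefore the wrong one, and your related claim that the naive leading form cannot reproduce $f(\infty)=\sqrt2$ is incorrect: the exact combinatorial reduction plus the standard two-point asymptotics already yield the full answer. I would also caution that your heuristic attributing the $4^{-g}$ to concentration of the splitting is loose; in the paper it emerges as the mismatch between the $2^{(2\lambda+4)g}$ growth of the separating sum and the $2^{(2\lambda+6)g}$ growth of the nonseparating sum.
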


M. Mirzakhani has proved in \cite{Mirzakhani:2008ola} that the ratio of frequencies of a given hyperbolic surface depends only on the topological properties, i.e. the genus $g$ and the number of cusps $n$. An explicit example is given by Bers' hairy torus, which is a torus of $n^2$ cusps. We can take a very symmetric hairy torus such as in \cite{Buser1992GeometryAS} section 5.3, or take a random shape hairy torus with the same number of cusps, but the ratio $\frac{c_{g,n,\mathrm{sep}}}{c_{g,n,\mathrm{nonsep}} }$ will always be $\frac{1}{6}$.

 The large $g$ asymptotic of $\frac{c_{g,n,\mathrm{sep}}}{c_{g,n,\mathrm{nonsep}} }$ when $1 \ll g \ll n$ (Remark 2.14 in \cite{dgzz23}) and $1 \ll n \ll g$ (Theorem 2.15 in \cite{dgzz23}) are already proved in \cite{dgzz23}, which reveals that the function $f$ has $f(0)=1$ and $f(\infty) = \sqrt{2}$.

The above conjecture clarifies that the ratio has a very controlled behaviour under the variation of $\frac{n}{g}$ when $g$ is large, in contrast to geometric quantities such as the spectral gap of Laplacian operators and Cheeger constant. The discussion of Witten–Kontsevich correlators includes works of A. Aggarwal, in \cite{aggarwal2021large}, where it was proved that Witten–Kontsevich correlators, after normalization, are uniformly close to 1 in the regime $n^2 \ll g$ and might explode exponentially otherwise. 

M. Mirzakhani investigated the geometric properties of random hyperbolic surfaces of large genus without cusps in \cite{Mirzakhani:2010pla}, and proved that there exists a spectral gap of 0.00247. This pioneering work of Mirzakhani was extended to the result of Y. Wu and Y. Xue \cite{Wu_2022} and that of M. Lipnowski and A. Wright \cite{lipnowski2023optimal}, showing separately an estimation as $\frac{3}{16} $ of spectral gaps of random hyperbolic surfaces. Furthermore, N. Anantharaman and L. Monk give a more accurate estimate of $\frac{2}{9}$ for this spectral gap in \cite{anantharaman2023friedmanramanujan}. Mirzakhani's work was also extended to the case of random hyperbolic surfaces of large genus with cusps. W. Hide in \cite{Hide2022} studied the spectrum of the Laplace operator on hyperbolic surfaces when $n^2 \ll g$, and proved the existence of the positive spectral gap; while Y. Shen and Y. Wu studied in \cite{shen2023arbitrarily} the asymptotic behavior of the Cheeger constants and spectral gaps of random hyperbolic surfaces when $n^2 \gg g$, and by their results, there is no uniformly positive spectral gap for Weil–Petersson random hyperbolic surfaces in that regime.

The contrasting behaviour of the frequencies when $\frac{n}{g}$ changes, in comparison to geometric quantities like spectral gap and Cheeger constant, emphasizes the topological nature of the frequencies.

\subsection{Asymptotic Form}

 Consider a multicurve $\gamma$ defined as $\sum_{i=1}^k H_i \gamma_i$, where $\gamma_i$'s are pairwise disjoint simple closed curves which are non-trivial, non-peripheral; and the weights $H_i$'s are integers assigned to each $\gamma_i$. Two multicurves are of the same topological type if they are in the same mapping class group orbit. 
    
    The dual graph $\Gamma$ to a multicurve $\gamma$ is called a stable graph, in which each vertex of the stable graph is associated to the genus of the corresponding connected component, and the half edge at the corresponding vertex represents each puncture on that component. Stable graph encode the topological type of the multicurve. Usually, when $\gamma$ is a nonseparating multicurve on a closed hyperbolic surface of genus $g$ with $n$ cusps, we denote the corresponding stable graph $\Gamma_{g,n}$; when $\gamma$ is separating, we denote the corresponding stable graph $\Gamma_{g_1,n_1}^{g_2,n_2}$, where $g_1,g_2$ and $n_1, n_2$ are the genus and number of cusps of the components of the original hyperbolics surface. Figure \ref{fig:stable_graph} demonstrates the examples of separating and nonseparating stable graphs.   

    \begin{figure}
        \centering
        \includegraphics[width=0.7\linewidth]{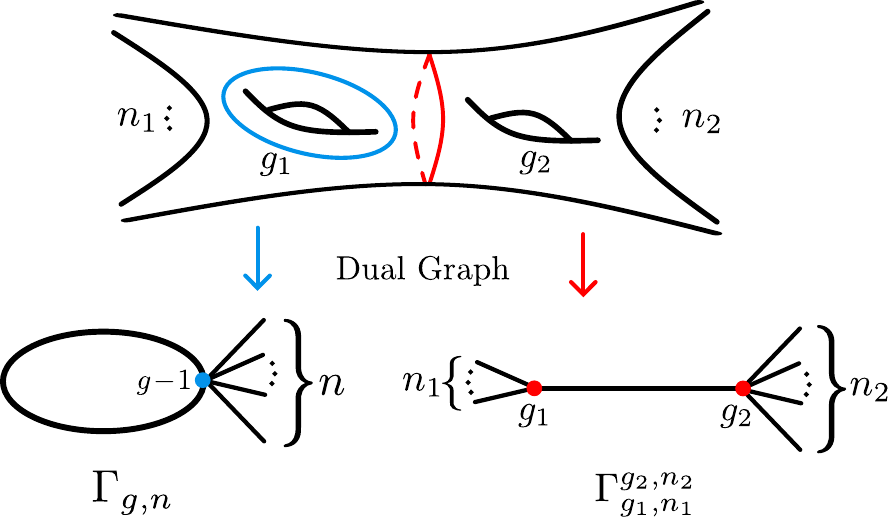}
        \caption{Non-separating (blue) and separating (red) simply closed curves for hyperbolic surfaces of genus $g$ with $n$ cusps (up), and their corresponding stable graphs (bottom left/right)}
        \label{fig:stable_graph}
    \end{figure}

    A square-tiled surface admits a decomposition into maximal horizontal cylinders filled with isometric closed regular flat geodesics. Using a similar correspondence one can show that stable graphs encode also the type of decomposition into cylinders \cite{dgzz23}. For any topological class $\gamma$ of simple closed multicurves, the associated Mirzakhani’s asymptotic frequency of hyperbolic multicurves coincides with the asymptotic frequency of simple closed flat geodesic multicurves of same topological type represented by associated square-tiled surfaces. 

    The moduli space of quadratic differentials is the total space of cotangent bundles of $\mathcal{M}_{g,n}$, therefore it naturally contains a canonical symplectic structure and a volume element. Square-tiled surfaces represent integer points in this moduli space, and asymptotic of the number of square-tiled surfaces of genus $g$ with $n$ conical points tiled with at most $N \to +\infty$ squares gives us the volume of the moduli space, which is called the Masur–Veech volume.

    We use $cyl_1(\Gamma)$ to denote the contribution to the Masur-Veech volume of $\mathcal{Q}_{g,n}$, the moduli space of meromorphic quadratic differentials, that comes from single-band square-tiled surfaces corresponding to the stable graph $\Gamma$. In that case sometimes we also use  $\Gamma_1$ instead of $\Gamma$, to emphasize that the surface is single band square-tiled. The contribution to that of stable graphs corresponding to a surface of genus $g$ with $n$ punctures is called $\text{Vol}(\Gamma_{g,n})$. Likely, a volume contribution of multicurves corresponding to a surface of genus $g$ with $n$ punctures is called $\text{Vol}(\gamma)$. Sometimes we also use the notation $\text{Vol}(\Gamma, \textbf{H})$ for the volume element for a stable graph of a certain given weight $\textbf{H}= (H_1, \dots, H_k)$.
    
     In \cite{dgzz23}, Proposition 6.4 explicitly provided the contribution to Masur-Veech volumes of principal stratum $\mathcal{Q}_{g,n}$ of meromorphic quadratic differentials, which comes from single-band square-tiled surfaces corresponding to all stable graphs. While in \cite{DGZZ21}, Theorem 1.22 gave a relation between Mirzakhani’s asymptotic frequency of closed geodesic multicurves of certain topological type and the volume contribution of the corresponding stable graph to Masur-Veech volume. Accordingly one has explicitly the ratio of frequencies of separating and nonseparating geodesics. \newline

    \begin{thm}
        (Theorem 1.22 in \cite{DGZZ21}) Let $(g,n)$ be a pair of nonnegative integers satisfying $2g+n>3$ and different from $(2,0)$. Let $\gamma \in \mathcal{ML}_{g,n}(\mathbb{Z})$ be a multicurve, and let $(\Gamma, \mathbf{H})$ be the associated stable graph and weights. Then the volume contribution $\mathrm{Vol} (\Gamma, \mathbf{H})$ to the Masur–Veech volume $\mathrm{Vol}\mathcal{Q}_{g,n}$ coincides with Mirzakhani’s asymptotic frequency $c(\gamma)$ of closed geodesic multicurves of topological type $\gamma$ up to an explicit factor depending only on $g$ and $n$:
    \begin{equation}
        \mathrm{Vol} (\Gamma, \mathbf{H}) = 2(6g-6+2n) \cdot (4g-4+n)! \cdot 2^{4g-3+n} \cdot c(\gamma).
    \end{equation}
    \end{thm}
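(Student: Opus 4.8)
The plan is to compute the two sides as independent but parallel sums of Witten--Kontsevich correlators indexed by the vertices of the stable graph $(\Gamma,\mathbf{H})$, and then to compare the resulting scalar prefactors. First I would invoke Mirzakhani's integration formula together with her asymptotic counting theorem to write $c(\gamma)$ explicitly. Cutting the surface along the components of $\gamma$ decomposes it into the pieces indexed by the vertices $v$ of $\Gamma$, and Mirzakhani's frequency is obtained by integrating, over the cone of edge-length parameters $\mathbf{x}\in\mathbb{R}_{\geq 0}^{E(\Gamma)}$ compatible with the weights $\mathbf{H}$, the product over $v$ of the top-degree homogeneous parts $V^{\mathrm{top}}_{g_v,n_v}$ of the Weil--Petersson volume polynomials, with the two boundary lengths identified along each edge. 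Using Mirzakhani's expression of the coefficients of $V^{\mathrm{top}}_{g_v,n_v}$ through $\psi$-class intersection numbers on $\overline{\mathcal{M}}_{g_v,n_v}$, this turns $c(\gamma)$ into a finite sum of products of correlators $\prod_v\langle\tau_{d_1}\cdots\tau_{d_{n_v}}\rangle_{g_v}$, each weighted by a universal prefactor recording $|\aut(\Gamma)|$, the weights $H_i$, and the normalization constant $b_{g,n}$ of the Thurston measure on $\mathcal{ML}_{g,n}$.

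Second I would compute $\vol(\Gamma,\mathbf{H})$ directly from the square-tiled surface model, whose single-band specialization is recorded in Proposition 6.4. The square-tiled surfaces associated with $\Gamma$ carry one horizontal band per edge, and the band widths play the role of the edge parameters $\mathbf{x}$; summing the corresponding lattice-point count over integer band widths and extracting the leading asymptotics replaces the continuous integral above by a discrete one. Via the standard comparison $\sum_{j\le N} j^{d}\sim N^{d+1}/(d+1)$ this produces the very same correlators attached to the vertices, now carrying the explicit factorial and powers of $2$ coming from the number $4g-4+n$ of simple zeros of a generic differential in $\mathcal{Q}_{g,n}$ and from the passage to the orientation double cover.

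The decisive step is then to match the two expressions term by term. Because each correlator appears on both sides with identical index data, the theorem reduces to the identity of the two scalar prefactors, that is, to the claimed constant $2(6g-6+2n)\cdot(4g-4+n)!\cdot 2^{4g-3+n}$. The step I expect to be the main obstacle is the exact bookkeeping of the powers of $2$: these enter from three logically independent sources --- the factor $2^{-M(\Gamma)}$ counting symmetric edges in Mirzakhani's frequency, the two-to-one relation between a quadratic differential and its orientation double cover in the Masur--Veech normalization, and the conversion between the hyperbolic and flat length scales --- and reconciling all three to land on exactly $2^{4g-3+n}$ is the delicate point. Once these, together with the factorial $(4g-4+n)!$ from the lattice count and the dimension factor $2(6g-6+2n)$, are collected, the identity follows.
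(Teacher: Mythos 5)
First, a point of context: the paper you are working against does not prove this statement at all --- it is quoted verbatim as Theorem~1.22 of \cite{DGZZ21} and used as an imported ingredient, so there is no internal proof to compare your argument with. Judged on its own merits, your outline does identify the correct ingredients of the original argument in \cite{DGZZ21}: Mirzakhani's integration formula and her asymptotic counting theorem express $c(\gamma)$ as an integral over the edge-length cone of products of top-degree parts of Weil--Petersson volume polynomials, hence as a sum of products of correlators over the vertices of $\Gamma$; and the lattice-point count of square-tiled surfaces with one horizontal band per edge produces, after the comparison $\sum_{j\le N} j^d \sim N^{d+1}/(d+1)$, a sum over the same index data. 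So the architecture of the proof is right.

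The genuine gap is that the entire content of the theorem \emph{is} the explicit constant $2(6g-6+2n)\cdot(4g-4+n)!\cdot 2^{4g-3+n}$, and your sketch explicitly stops short of computing it. Saying that the theorem ``reduces to the identity of the two scalar prefactors'' and then declaring the bookkeeping of the powers of $2$ to be ``the delicate point'' is not a proof of that identity --- it is a restatement of what remains to be proven. In particular you never pin down where each factor actually comes from: the factor $2(6g-6+2n)=2d$ arises from the convention defining the Masur--Veech volume via the total mass of the hyperboloid of area-$\le\frac12$ surfaces (differentiation in the radial direction contributes the real dimension $2d$), the factorial $(4g-4+n)!$ comes from the labelling of the $4g-4+n$ simple zeros of a generic differential in the principal stratum, and the power $2^{4g-3+n}$ aggregates the normalization of the Thurston measure, the half-integer circumferences on the orientation double cover, and the per-edge symmetry factors; none of these is traced through in your sketch, and an error in any one of them would change the statement. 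To close the argument you must either carry out this accounting explicitly or, since this is a cited theorem of \cite{DGZZ21}, simply refer to the proof given there rather than reproving it.
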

    The above theorem allows us to write $\frac{c_{g,n,\mathrm{sep}}}{c_{g,n,\mathrm{nonsep}} }$ as a ratio of volume contributions of stable graphs, which we give here:
    \begin{equation}
        \frac{c_{g,n,\mathrm{sep}}}{c_{g,n,\mathrm{nonsep}}}= \frac{\frac{1}{2} \sum_{n_1=0}^{n} \binom{n}{n_1}\sum_{g_1=0}^{g} |\mathrm{Aut}\Gamma^{g_2,n_2}_{g_1,n_1}| |\mathrm{Vol}\left( \Gamma^{g_2,n_2}_{g_1,n_1} \right) | }{\text{Vol}(\Gamma_{g,n})}.
    \end{equation}
    
    The following formula is true for any stable graph $\Gamma$ with a single edge, and for any $g$ and $n$, as maintained by a generalization of Theorem 3.1 in \cite{dgzz23}:
    
    \begin{equation*}
        \mathrm{Vol}(\Gamma)=cyl_1{\Gamma} \cdot \zeta (6g-6+2n).
    \end{equation*}
      
      In our case when considering large genus asymptotics, one always has $\mathrm{Vol}(\Gamma) \sim cyl_1{\Gamma} $, since $\zeta (6g-6+2n)$ tends to $1$ exponentially fast as $g \rightarrow \infty$.
    
    Furthermore, $cyl_1{\Gamma}$ has an explicit expression as a sum of products of binomial coefficients and Witten-Kontsevich correlators when $g \geq 2$. On the Deligne-Mumford compactification $\overline{\mathcal{M}}_{g,n}$ of moduli space $\mathcal{M}_{g,n}$, let $\mathcal{L}_{n \geq i \geq 1}$ be the tautological line bundles over $\overline{\mathcal{M}}_{g,n}$, whose fiber at a point $(C, x_1, \dots, x_n)$ is the cotangent space of the curve. The first Chern class of this bundle $c_1(\mathcal{L}_i)$ are referred as the $\psi_i$ class of $\overline{\mathcal{M}}_{g,n}$. Then the intersection of the $\psi$ classes 
    $$\langle \tau_{d_1} \dots \tau_{d_n} \rangle = \int_{\overline{\mathcal{M}}_{g,n}} \psi_1^{d_1} \dots \psi_n^{d_n}$$
    is called the Witten-Kontsevich correlator, where $\sum_{i=1}^n d_i = 3g-3+n$.
    \\
    \begin{prop}
        (Proposition 6.4 in \cite{dgzz23}) Assume $g \geq 2$. The contribution to the Masur-Veech volume of the principle stratum $\mathcal{Q}_{g,n}$ of meromorphic quadratic differentials coming from single-band square-tiled surfaces corresponding to the stable graph $\Gamma_{g,n}$ has the following form:
        \begin{equation}
            cyl_1(\Gamma_{g,n}) = 2^{g+1} \binom{4g-4+n}{g} \cdot g! \sum_{k=0}^{3g-4} \binom{3g-4+2n}{n+k}\langle \tau_k\tau_{g-4-k}\rangle_{g-1}.
        \end{equation}

        The total contribution to the Masur–Veech volume of the principal stratum $\mathcal{Q}_{g,n}$ of meromorphic quadratic diﬀerentials coming from single-band square-tiled surfaces corresponding to all stable graphs $\Gamma^{g_2,n_2}_{g_1,n_1}$  has the following form:

     \begin{equation}
     \label{numerator}
        \frac{1}{2}\sum_{n_1=0}^{n} \binom{n}{n_1} \sum_{g_1=0}^{g}|\mathrm{Aut}\Gamma^{g_2,n_2}_{g_1,n_1}| |\mathrm{Vol}  \left( \Gamma^{g_2,n_2}_{g_1,n_1} \right) |=\frac{2^{g+1}}{24^g} \cdot \binom{4g-4+n}{g} \displaystyle \sum\limits_{g_1=0}^{g} \binom{g}{g_1} \binom{3g-4+2n}{3g_1-2+n}
    \end{equation}
    \end{prop}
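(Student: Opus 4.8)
The plan is to compute the contribution of each separating one-edge stable graph $\Gamma^{g_2,n_2}_{g_1,n_1}$ separately, exploiting the fact that a single separating curve cuts the surface into two one-holed pieces, and then to sum over all admissible splittings. First I would record the per-graph cylinder contribution. By the generalization of Theorem 3.1 of \cite{dgzz23} already quoted above we have $\mathrm{Vol}(\Gamma)\sim cyl_1(\Gamma)$, and for a one-edge graph the single horizontal band crosses the curve once, so the associated lattice-point count factorizes over the two vertices. The crucial simplification, in contrast with the nonseparating graph $\Gamma_1(g,n)$ of the first formula, is that each vertex of $\Gamma^{g_2,n_2}_{g_1,n_1}$ carries exactly one half-edge; hence its local intersection number is a \emph{one-point} Witten--Kontsevich correlator $\langle\tau_{3g_i-2}\rangle_{g_i}$ rather than a two-point correlator. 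I would then invoke the closed form
\[
  \langle\tau_{3g_i-2}\rangle_{g_i}=\frac{1}{24^{g_i}\,g_i!},
\]
which is exactly the origin of the factor $24^{-g}$ on the right-hand side of \eqref{numerator}: since $g_1+g_2=g$, the product of the two one-point numbers is $\frac{1}{24^{g}\,g_1!\,g_2!}=\frac{1}{24^{g}}\cdot\frac{1}{g!}\binom{g}{g_1}$.

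Next I would assemble the global sum. The binomial $\binom{n}{n_1}$ distributes the $n$ cusps between the two sides, the factor $\tfrac12$ together with $|\mathrm{Aut}\,\Gamma^{g_2,n_2}_{g_1,n_1}|$ implements the symmetry exchanging the two sides of the curve, and the remaining binomials record how the height parameters are split across the band. The step I expect to be the main obstacle is pinning down these height-distribution weights exactly, that is, deriving the per-graph formula
\[
  cyl_1\!\left(\Gamma^{g_2,n_2}_{g_1,n_1}\right)\ \propto\ \binom{3g-4+n}{3g_1-2+n-n_1}\,\langle\tau_{3g_1-2}\rangle_{g_1}\langle\tau_{3g_2-2}\rangle_{g_2}
\]
with the correct global normalization, since this requires reproducing the lattice-point bookkeeping of \cite{dgzz23} in the two-vertex setting and matching the prefactors $2^{g+1}$ and $\binom{4g-4+n}{g}$.

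Finally I would carry out the two remaining summations. The sum over $n_1$ is a Vandermonde convolution,
\[
  \sum_{n_1}\binom{n}{n_1}\binom{3g-4+n}{3g_1-2+n-n_1}=\binom{3g-4+2n}{3g_1-2+n},
\]
which produces the binomial appearing on the right, while the surviving sum over $g_1$ is left as is. Collecting the one-point evaluations into $\frac{1}{24^{g}}\binom{g}{g_1}$ and the global constants into $2^{g+1}\binom{4g-4+n}{g}$ then yields the claimed right-hand side of \eqref{numerator}. As consistency checks I would verify the stability constraints $2g_i+n_i+1\ge 3$ at the boundary terms $g_1\in\{0,g\}$ and confirm agreement with the $n=0$ computation of \cite{dgzz23}.
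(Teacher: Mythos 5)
First, a point of reference: the paper you are working from does not prove this proposition at all --- it is quoted verbatim as Proposition~6.4 of \cite{dgzz23}, so there is no internal proof to compare your attempt against; your proposal has to stand on its own as a reconstruction of the argument in \cite{dgzz23}. Your structural skeleton is the right one: the separating one-edge graph $\Gamma^{g_2,n_2}_{g_1,n_1}$ contributes a product of local intersection numbers over its two vertices, the genus split produces $\frac{1}{24^{g_1}g_1!}\cdot\frac{1}{24^{g_2}g_2!}=\frac{1}{24^{g}g!}\binom{g}{g_1}$, and the sum over cusp distributions is a Vandermonde convolution yielding $\binom{3g-4+2n}{3g_1-2+n}$; the factor $\tfrac12$ against $|\mathrm{Aut}|$ is also handled correctly. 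One small correction: the vertex of genus $g_i$ carries one half-edge \emph{and} $n_i$ legs, so the local correlator is the $(n_i+1)$-point number $\langle\tau_0^{n_i}\tau_{3g_i-2+n_i}\rangle_{g_i}$, not literally a one-point correlator; it only reduces to $\langle\tau_{3g_i-2}\rangle_{g_i}=\frac{1}{24^{g_i}g_i!}$ after $n_i$ applications of the string equation. You land on the right number, but the justification as written is not quite the one that applies.

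The genuine gap is exactly the step you flag as ``the main obstacle'' and then do not carry out: the per-graph formula $cyl_1(\Gamma^{g_2,n_2}_{g_1,n_1})\propto\binom{3g-4+n}{3g_1-2+n-n_1}\langle\tau_{3g_1-2}\rangle_{g_1}\langle\tau_{3g_2-2}\rangle_{g_2}$ with the global prefactor $2^{g+1}\binom{4g-4+n}{g}$. This is not bookkeeping that can be waved through: in the volume formula of \cite{DGZZ21}, the edge variable $b$ enters each vertex polynomial through a coefficient $1/(2d_i+1)!$ with $d_i=3g_i-2+n_i$, and the $\mathcal{Z}$-operation on $b^{6g-7+2n}$ produces $(6g-7+2n)!\,\zeta(6g-6+2n)$, so the combinatorial factor that naturally appears is $(6g-7+2n)!/\bigl((2d_1+1)!(2d_2+1)!\bigr)$ --- a binomial in the \emph{doubled} indices --- and converting this into the undoubled binomial $\binom{3g-4+n}{d_1}$ together with the factors $2^{g+1}$ and $\binom{4g-4+n}{g}$ is precisely where the content of the proposition lies. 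Without reproducing that conversion, your argument verifies only the shape of the right-hand side of \eqref{numerator} (the powers of $2$ and $24$, the $\binom{g}{g_1}$, the Vandermonde-produced binomial), not the identity itself. Your proposed consistency checks at $g_1\in\{0,g\}$ and at $n=0$ are good, but they cannot substitute for the normalization computation.
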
 
    
    Therefore numerator in $\frac{c_{g,n,\mathrm{sep}}}{c_{g,n,\mathrm{nonsep}} }$ is given directly in \eqref{numerator}.

    Meanwhile, when $g$ is large, the 2-correlators have the following asymptotics:\newline
   \begin{lem}
       The large genus asymptotics for 2-correlators is
    \begin{equation}
        \langle\tau_k\tau_{3g-1-k} \rangle_g= \frac{1}{24^g \cdot g!} \cdot \frac{(6g-1)!!}{(2k+1)!!(6g-1-2k)!!} \left(1+O\Big(\frac{1}{g}\Big)\right),
    \end{equation}
       and the error term $O(\frac{1}{g})$ is uniform when $0 \leq k \leq 3g-1$.
   \end{lem}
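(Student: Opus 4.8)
The plan is to anchor the two endpoints exactly and then to propagate control across the range $0\le k\le 3g-1$ using the Virasoro (Dijkgraaf--Verlinde--Verlinde) recursion. I would first pass to the rescaled quantity
\[
R_g(k):=(2k+1)!!\,(6g-1-2k)!!\,\langle\tau_k\tau_{3g-1-k}\rangle_g ,
\]
so that the assertion is equivalent to $R_g(k)=\frac{(6g-1)!!}{24^g\,g!}\bigl(1+O(1/g)\bigr)$ uniformly in $k$; in other words, that $R_g(k)$ is essentially independent of $k$. The endpoints are pinned with no error at all: the string equation gives $\langle\tau_0\tau_{3g-1}\rangle_g=\langle\tau_{3g-2}\rangle_g=\frac{1}{24^g\,g!}$, whence $R_g(0)=\frac{(6g-1)!!}{24^g\,g!}$, and the symmetry $k\leftrightarrow 3g-1-k$ reduces the problem to $0\le k\le\lfloor(3g-1)/2\rfloor$ and fixes $R_g(3g-1)$ as well.

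Next I would write out the recursion with distinguished insertion $\tau_k$. It presents $(2k+1)!!\,\langle\tau_k\tau_{3g-1-k}\rangle_g$ as three pieces: a \emph{join} term merging $\tau_k$ with $\tau_{3g-1-k}$ into the one-point correlator $\langle\tau_{3g-2}\rangle_g$ with coefficient $\frac{(6g-3)!!}{(6g-3-2k)!!}$; a \emph{genus-lowering} term $\tfrac12\sum_{a+b=k-2}(2a+1)!!(2b+1)!!\,\langle\tau_a\tau_b\tau_{3g-1-k}\rangle_{g-1}$; and a \emph{splitting} term that convolves lower correlators over partitions of total genus $g$. Inserting $\langle\tau_{3g-2}\rangle_g=\frac{1}{24^g\,g!}$ shows that the join term alone contributes exactly the fraction $\frac{6g-1-2k}{6g-1}$ of the target value of $R_g(k)$; the entire content of the lemma is thus that the remaining fraction $\frac{2k}{6g-1}$ is supplied, up to a relative $O(1/g)$, by the genus-lowering and splitting terms. (For $k=1$ those two sums are empty, reproducing the exact dilaton value $\langle\tau_1\tau_{3g-2}\rangle_g=(2g-1)/24^g g!$ and giving $R_g(1)/R_g(0)=\frac{6g-3}{6g-1}=1+O(1/g)$, a useful consistency check.)

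The heart of the argument is an induction on $g$ in which one assumes the analogous uniform asymptotics for three-point correlators at genus $g-1$. The decisive feature is that when one substitutes the three-point estimate $\langle\tau_a\tau_b\tau_{3g-1-k}\rangle_{g-1}\approx\frac{(6g-5)!!}{24^{g-1}(g-1)!\,(2a+1)!!(2b+1)!!\,(6g-1-2k)!!}$ into the genus-lowering term, the weights $(2a+1)!!(2b+1)!!$ cancel, so that the summand is nearly independent of $(a,b)$ and the sum collapses to its number of terms, roughly $k$, times a single factor. A short computation then shows that this reproduces precisely the missing fraction $\frac{2k}{6g-1}$ of $R_g(k)$ to leading order, with the discrepancy in the count ($k-1$ versus $k$) and in the ratios of double factorials contributing only $O(1/g)$; the splitting term is of strictly smaller order, being suppressed by the super-exponential growth of $24^{g}\,g!$, and is absorbed into the error.

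The main obstacle is the uniformity of the error over the full range, and in particular the middle range $k\approx 3g/2$, where both $(2k+1)!!$ and $(6g-1-2k)!!$ are largest and the convolution $\sum_{a+b=k-2}$ is most spread out, so that a crude termwise bound loses the required power of $g$; one must instead estimate the convolution as a whole with constants independent of $k$. A second, structural obstacle is that the induction is not internal to two-point correlators: it requires the three-point asymptotics as input, so the clean route is to regard this statement as the $n=2$ instance of Aggarwal's uniform large-genus asymptotics for $\psi$-class intersection numbers, the fixed number of marked points furnishing the quantitative rate $O(1/g)$. Alternatively the induction sketched above can be closed by carrying the three-point estimate in parallel, but the uniform treatment of the middle range remains the step demanding the most care.
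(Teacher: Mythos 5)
The paper does not actually prove this lemma: it is quoted from \cite{DGZZ21}, where it follows from an exact closed-form expression for the normalized two-point correlators
$a_{g,k}=\tfrac{24^g g!\,(2k+1)!!\,(6g-1-2k)!!}{(6g-1)!!}\,\langle\tau_k\tau_{3g-1-k}\rangle_g$
(Proposition 4.1 and Formula 4.2 there), from which the uniform $1+O(1/g)$ estimate is read off directly. Your preparatory computations are correct and well chosen: the string and dilaton values, the symmetry $k\leftrightarrow 3g-1-k$, and the identification of the join term of the DVV recursion as contributing exactly the fraction $\tfrac{6g-1-2k}{6g-1}$ of the target value of $R_g(k)$ all check out.

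The gap is exactly where you flag it, and it cannot be deferred. In the middle range $k\asymp g$ the genus-lowering and splitting terms are not corrections: they must supply a \emph{constant} fraction $\tfrac{2k}{6g-1}\approx\tfrac12$ of $R_g(k)$, so obtaining a relative error $O(1/g)$ for the two-point correlator requires the three-point correlators at genus $g-1$ uniformly to relative precision $O(1/g)$ over their entire index range. Running the same recursion on three-point correlators escalates the requirement to four-point correlators, and so on: the induction never closes within any fixed number of marked points. Closing it simultaneously in $n$ and $g$ is precisely the content of Aggarwal's uniform asymptotics \cite{aggarwal2021large}, a substantial independent theorem, so your scheme either presupposes what it is meant to prove or collapses to a citation that makes the DVV bookkeeping redundant. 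Moreover, that theorem gives an error of the form $1+o(1)$; the explicit uniform rate $O(1/g)$ asserted in the lemma does not follow merely from the number of marked points being fixed, and in the source the paper relies on it is extracted from the exact two-point formula instead. The workable versions of your argument are therefore: invoke the exact 2-correlator formula of \cite{DGZZ21} (the route the paper actually takes), or invoke a quantitative refinement of Aggarwal's theorem; the inductive DVV argument in between is not yet a proof.
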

   The proof of this lemma can be found in \cite{DGZZ21} Proposition 4.1 and Formula~4.2, or with more generality, from \cite{aggarwal2021large}, theorem~1.5.

   Hence the denominator of $\frac{c_{g,n,\mathrm{sep}}}{c_{g,n,\mathrm{nonsep}} }$ has the following asymptotic expression in terms of binomial coefficients:
    \begin{equation}
    \label{eq:denomaymp}
        \vol(\Gamma_{g,n})= \frac{2^{g+3} g}{24^{g} (g-1)} \cdot \binom{4g-4+n}{g} \cdot \sum_{k=0}^{3g-4} \frac{\binom{3g-4+2n}{n+k} \binom{6g-6}{2k+1} }{\binom{3g-4}{k}} \cdot \left(1+O\Big(\frac{1}{g}\Big) \right)\, .
    \end{equation}
    Note that the error term in \eqref{eq:denomaymp} does not depend on $n$.
    
    Consequently, the ratio of frequencies of separating and nonseparating curves has the following asymptotics
    \begin{equation}
    \label{eq:ratio}
        \dfrac{c_{g,n,\mathrm{sep}}}{c_{g,n,\mathrm{nonsep}}} = \frac{1}{4}\ \dfrac{\displaystyle \sum\limits_{g_1=0}^{g} \binom{g}{g_1} \binom{3g-4+2n}{3g_1-2+n}}{\displaystyle \sum\limits_{k=0}^{3g-4} \frac{\binom{3g-4+2n}{n+k} \binom{6g-6}{2k+1} }{\binom{3g-4}{k}}} \cdot \left(1+O\Big(\frac{1}{g}\Big) \right)\, ,
    \end{equation}
    where the error term comes from the asymptotics of the 2-correlators.

\section{Main Theorem and Its Proof}    

\subsection{The Main Theorem}

To prove conjecture \ref{thm:conjMain}, we specify the conjecture by giving explicitly the expression of $f(\frac{n}{g})$, and we restate the conjecture as the theorem below.\newline       
\begin{thm} \label{thm:MainResult}
        Conjecture \ref{thm:conjMain} holds, and
        the unknown function $f$ is given by
        \begin{equation}
         f(\lambda)  =\sqrt{\frac{6+2\lambda}{6+\lambda} }\, ,
        \end{equation}
        where we define $n = \lambda g$, $\lambda \in \mathbb{R}_+$.
\end{thm}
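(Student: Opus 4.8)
The plan is to extract the large-$g$ asymptotics of \eqref{eq:ratio} directly, by applying the Laplace (saddle-point) method to the numerator sum
\begin{equation*}
N(g,n) = \sum_{g_1=0}^{g} \binom{g}{g_1}\binom{3g-4+2n}{3g_1-2+n}
\end{equation*}
and to the denominator sum
\begin{equation*}
D(g,n) = \sum_{k=0}^{3g-4} \frac{\binom{3g-4+2n}{n+k}\binom{6g-6}{2k+1}}{\binom{3g-4}{k}},
\end{equation*}
after setting $n = \lambda g$. Since \eqref{eq:ratio} already reduces $c_{g,n,sep}/c_{g,n,nonsep}$ to $\tfrac14\,N/D$ up to a uniform $1+\cO(1/g)$ factor, it suffices to evaluate $N$ and $D$ to leading order together with their Gaussian prefactors, and then to read off the exponential scale $4^{-g}$, the polynomial scale $\sqrt{2/(3\pi g)}$, and the function $f(\lambda)$ from the quotient.

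First I would write each summand as $\exp(\Phi)$ and replace every binomial by its entropy form through Stirling, using $\binom{M}{\alpha M} \approx \exp\!\big(M\,h(\alpha)\big)/\sqrt{2\pi M\alpha(1-\alpha)}$ with $h(u) = -u\ln u - (1-u)\ln(1-u)$. Writing $x = g_1/g$ and $y = k/g$, the exponents become, to leading order, $\phi(x) = g\,h(x) + (3+2\lambda)g\,h\!\big(\tfrac{3x+\lambda}{3+2\lambda}\big)$ for $N$ and $\psi(y) = (3+2\lambda)g\,h\!\big(\tfrac{\lambda+y}{3+2\lambda}\big) + 3g\,h(y/3)$ for $D$. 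Solving $\phi'(x)=0$ and $\psi'(y)=0$ gives, remarkably, the $\lambda$-independent critical points $x^\ast = 1/2$ and $y^\ast = 3/2$; one checks that at these points every binomial is \emph{central}, i.e. its lower index is exactly half of its upper index. Consequently each entropy equals $\ln 2$ and the summand collapses to a product of central binomials $\binom{M}{M/2}\approx 2^M\sqrt{2/(\pi M)}$. This is the structural heart of the argument: the dominant contribution comes from the symmetric point independently of the cusp ratio $\lambda$, which is precisely why the ratio is so rigidly controlled.

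Next I would compute the Gaussian widths from the second derivatives at the saddles. Using $h''(1/2) = -4$ one obtains $\phi''(1/2) = -\tfrac{8g(6+\lambda)}{3+2\lambda}$ and $\psi''(3/2) = -\tfrac{8g(3+\lambda)}{3(3+2\lambda)}$; passing back to the integer variables $g_1,k$ (so that $\Phi'' = \phi''/g^2$, etc.) and multiplying the value at the saddle by $\sqrt{2\pi/|\Phi''|}$ yields
\begin{equation*}
N(g,n) \approx \frac{2^{4g-4+2n}}{\sqrt{\pi g\,(6+\lambda)}}, \qquad D(g,n) \approx 2^{6g-6+2n}\cdot\frac{\sqrt{3}}{2\sqrt{3+\lambda}}.
\end{equation*}
Forming $\tfrac14\,N/D$ then produces the factor $2^{-2g}=4^{-g}$ from the exponents $(4g-4+2n)-(6g-6+2n)$, while the prefactors combine to
\begin{equation*}
\frac{c_{g,n,sep}}{c_{g,n,nonsep}} \approx \frac{1}{4^g}\cdot\frac{2}{\sqrt{3\pi g}}\cdot\frac{\sqrt{3+\lambda}}{\sqrt{6+\lambda}} = \sqrt{\frac{2}{3\pi g}}\cdot\frac{1}{4^g}\cdot\sqrt{\frac{6+2\lambda}{6+\lambda}},
\end{equation*}
which identifies $f(\lambda) = \sqrt{(6+2\lambda)/(6+\lambda)}$. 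A final check confirms $f(0)=1$, $f(\infty)=\sqrt2$, and, writing $f(\lambda)^2 = 2 - 6/(6+\lambda)$, that $f$ increases monotonically, matching every qualitative feature asserted in Conjecture \ref{thm:conjMain}.

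The main obstacle is not locating the saddle or computing constants but establishing the Laplace approximation \emph{rigorously and uniformly in $n$}, since uniformity in $n$ is the entire content of the conjecture. Concretely I would need to bound the boundary and tail contributions of both sums uniformly (the saddles sit in the interior at $x^\ast=1/2$, $y^\ast=3/2$ for every $\lambda\ge 0$, so these are exponentially small, but the constants must be controlled as $\lambda\to\infty$), and to show that the relative error of the Gaussian approximation is $\cO(1/g)$ uniformly in $\lambda\in[0,\infty)$. Here the symmetry of $h$ about $1/2$ helps decisively: the odd derivatives of the exponent vanish at the central saddle, so the first correction is quartic and of relative size $\cO(1/g)$ rather than $\cO(1/\sqrt g)$. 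These bounds must finally be merged with the $\cO(1/g)$ error already carried in \eqref{eq:ratio} from the $2$-correlator asymptotics to produce a single error term $\varepsilon(g,n)\to 0$ uniformly in $n$.
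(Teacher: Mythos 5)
Your proposal is correct and follows essentially the same route as the paper: both locate the dominant contribution at the central binomials ($g_1=g/2$, $k=(3g-4)/2$), use the entropy form of Stirling's formula, bound the tails uniformly in $\lambda$ via the inequality $H(\tfrac12+\tfrac{x}{2})\le \log 2-\tfrac{x^2}{2}$, and evaluate the central window as a Gaussian integral, arriving at the same asymptotics $N\sim 2^{(2\lambda+4)g-4}/\sqrt{\pi g(6+\lambda)}$ and $D\sim\sqrt{3/(3+\lambda)}\,2^{(2\lambda+6)g-7}$. Your saddle-point bookkeeping (second derivatives $-8g(6+\lambda)/(3+2\lambda)$ and $-8g(3+\lambda)/(3(3+2\lambda))$) reproduces exactly the Gaussian widths appearing in the paper's equations \eqref{eq:asymp_num} and \eqref{eq:asymp_denom}.
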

        
\subsection{Large $g,n$ Estimate of the Ratio}
    To prove theorem \ref{thm:MainResult}, we will use the following lemmas.\newline
    
\begin{lem} \label{lem-Estimate}
    (Lemma B.6. in \cite{dgzz23}): 
    \begin{equation}
        \binom{y}{p y} = e^{y H(p)} \frac{1}{\sqrt{2\pi y\, p(1-p)}} \left( 1+ O\left(\frac{1}{y}\right) \right)\, , \quad \text{where} \quad H(p) = - p \log p - (1-p) \log (1-p) \, ,
    \end{equation}
uniformly in $p$ restricted to compact subsets of $(0, 1)$. Here, the binomial coefficient with real parameters is interpreted in terms of $\Gamma$ functions,    
    \begin{equation}
        \binom{a}{b} = \frac{\Gamma(a+1)}{\Gamma(b+1)\Gamma(a-b+1)}\, .
    \end{equation}
\end{lem}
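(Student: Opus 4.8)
The plan is to derive this estimate directly from Stirling's asymptotic expansion for the Gamma function, since the binomial coefficient here is defined via $\Gamma$-functions. First I would recall Stirling's formula in the form
\begin{equation*}
    \log \Gamma(z+1) = z \log z - z + \frac{1}{2}\log(2\pi z) + O\!\left(\frac{1}{z}\right)
\end{equation*}
as $z \to \infty$, where the error term is uniform provided $z$ stays bounded away from $0$ (in our application $z$ will be a multiple of $y$ with the multiplier bounded away from $0$, which is exactly what the restriction of $p$ to a compact subset of $(0,1)$ guarantees). Writing $a = y$, $b = py$, and $a-b = (1-p)y$, the strategy is to apply this expansion to each of the three factors $\Gamma(y+1)$, $\Gamma(py+1)$, $\Gamma((1-p)y+1)$ appearing in $\binom{y}{py} = \frac{\Gamma(y+1)}{\Gamma(py+1)\,\Gamma((1-p)y+1)}$, and then take logarithms to turn the product into a sum.

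The key computation is to collect the three contributions. The leading terms combine to give
\begin{equation*}
    y\log y - py \log(py) - (1-p)y \log((1-p)y) = y\bigl(-p\log p - (1-p)\log(1-p)\bigr) = y\,H(p),
\end{equation*}
where the $\log y$ pieces cancel because $py + (1-p)y = y$ (so the coefficients of $y \log y$ sum to zero), and similarly the linear terms $-z$ cancel since $-y + py + (1-p)y = 0$. This is the step that produces the exponential factor $e^{yH(p)}$. Next I would collect the half-logarithm terms: $\frac12\log(2\pi y) - \frac12\log(2\pi p y) - \frac12\log(2\pi (1-p) y)$, which simplifies to $-\frac12 \log\bigl(2\pi y\, p(1-p)\bigr)$, giving the prefactor $\frac{1}{\sqrt{2\pi y\, p(1-p)}}$. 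Finally the three $O(1/z)$ error terms, each of order $O(1/y)$ uniformly in $p$ on the compact set, combine into a single $O(1/y)$ error in the logarithm; exponentiating then yields the multiplicative factor $1 + O(1/y)$.

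\textbf{Main obstacle.}
The principal point requiring care is the \emph{uniformity} of the error term in $p$. The subtlety is that as $p$ approaches $0$ or $1$, one of the arguments $py$ or $(1-p)y$ ceases to be large, and the Stirling error $O(1/(py))$ or $O(1/((1-p)y))$ blows up; this is precisely why the statement restricts $p$ to compact subsets of $(0,1)$. On such a compact set $[p_0, 1-p_0]$, both $p$ and $1-p$ are bounded below by $p_0 > 0$, so each argument is at least $p_0 y$, and the three error terms are each uniformly $O(1/y)$ with an implied constant depending only on $p_0$. I would make this explicit by bounding the Stirling remainder using its known integral representation (or the standard bound $|R(z)| \le \frac{1}{12z}$ for the leading remainder), thereby confirming that the aggregate error is genuinely $O(1/y)$ uniformly over the compact range of $p$.
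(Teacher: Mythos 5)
Your proof is correct and takes exactly the same route as the paper, whose entire proof reads ``The proof is by applying the Stirling's formula directly.'' You have simply carried out that application in full detail --- the cancellation of the $y\log y$ and linear terms, the combination of the half-logarithm terms into the prefactor $\frac{1}{\sqrt{2\pi y\,p(1-p)}}$, and the uniformity of the $O(1/y)$ error on compact subsets of $(0,1)$ --- all of which is accurate.
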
 
\begin{proof}
  The proof involves directly applying Stirling’s formula.  
\end{proof}

\begin{lem}
    The function $H(p)$ has the following properties,
    \begin{itemize}
        \item $0 \leq H(p) \leq \log 2$, $\forall p \in (0,1)$.
        \item $H(p)$ obtains its maximal value $\log 2$ at $p=1/2$.
        \item $H(p)$ satisfies the inequality
    \begin{equation} \label{eqn-HinEquality}
        H\left(\frac{1}{2}+\frac{x}{2} \right) \leq \log 2 -\frac{x^2}{2}\, , \qquad \forall  x \in (-1,1)\, .
    \end{equation}
    \end{itemize}
\end{lem} 
\begin{proof}
$H'(p) = \log (1-p) - \log p$ is positive when $0<p<1/2$, negative when $1/2<p<1$, zero when $p=1/2$. Therefore $H(p)$ obtains the maximum at $p=1/2$. The proof of the inequality is similar, as one can verify from the first derivative that $H(1/2+x/2)-\log 2 + x^2/2$ obtains its maximum at $x=0$. 
\end{proof}
\begin{lem} \label{lem:tailEstimate}
(Problem 9.42 in \cite{graham1994concrete})
The contribution of tails (neighbourbood of the endpoints) to the total sum of binomial coefficients admits the following bound:
    \begin{equation}
        \sum_{k =0}^{\lfloor sy \rfloor} \binom{y}{k} < \frac{1-s}{1-2s} \frac{1}{\sqrt{2\pi y s(1-s)}} e^{y H(s)} \left( 1+ O\left(\frac{1}{y}\right)\right) \, , 
    \end{equation}
    when $s \in (0,\frac{1}{2})$. The contribution of the region $k \in [(1-s)y,y]$ is bounded in the same way.
\end{lem}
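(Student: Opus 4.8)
The plan is to reduce the tail sum to its largest term times a convergent geometric series, and then to invoke Lemma \ref{lem-Estimate}. Since $s \in (0,\tfrac12)$, every index $k$ in the range $0 \le k \le sy$ lies below the central index $y/2$, so the coefficients $\binom{y}{k}$ are strictly increasing in $k$ on this range; in particular the last term $\binom{y}{\lfloor sy\rfloor}$ is the largest and should carry the whole estimate. The natural device is to control the ratio of consecutive terms,
\begin{equation*}
  \frac{\binom{y}{k-1}}{\binom{y}{k}} = \frac{k}{y-k+1}\, ,
\end{equation*}
which is increasing in $k$ and hence maximized over the summation range at $k = \lfloor sy\rfloor$, where it is bounded above by $q := \frac{s}{1-s} < 1$.

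First I would factor out the top term and bound what remains by a full geometric series: reading the sum backwards from $k = \lfloor sy\rfloor$, each downward step multiplies by a factor at most $q$, so
\begin{equation*}
  \sum_{k=0}^{\lfloor sy\rfloor} \binom{y}{k} \le \binom{y}{\lfloor sy\rfloor}\sum_{j=0}^{\infty} q^{j} = \binom{y}{\lfloor sy\rfloor}\cdot \frac{1}{1-q} = \binom{y}{\lfloor sy\rfloor}\cdot \frac{1-s}{1-2s}\, .
\end{equation*}
This already produces the prefactor $\frac{1-s}{1-2s}$ appearing in the statement. Next I would apply Lemma \ref{lem-Estimate} with $p = s$ to replace $\binom{y}{\lfloor sy\rfloor}$ by its Stirling asymptotics $e^{yH(s)}\frac{1}{\sqrt{2\pi y\, s(1-s)}}\bigl(1 + O(1/y)\bigr)$, which is legitimate because $s$ stays in a compact subset of $(0,1)$. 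Combining the two displays yields exactly the asserted inequality, and the bound on the symmetric region $k \in [(1-s)y, y]$ then follows from the symmetry $\binom{y}{k} = \binom{y}{y-k}$.

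The one point requiring genuine care is the passage from $\lfloor sy\rfloor$ to $sy$. Strictly, the geometric ratio should be $\frac{\lfloor sy\rfloor}{y - \lfloor sy\rfloor + 1}$, and the floor must be absorbed into the $O(1/y)$ error when moving from $\binom{y}{\lfloor sy\rfloor}$ to $e^{yH(s)}(\dots)$; since $H$ is smooth at $s$ and $|\lfloor sy\rfloor - sy| \le 1$, this shift perturbs the exponent $yH$ by only $O(1)$ and the coefficient by a bounded factor, so it is harmless after being folded into the error term. I expect no deeper obstacle than this bookkeeping: the crucial structural input is the uniformity of the ratio bound $q$ over the whole summation range, which is guaranteed precisely by the monotonicity of $k \mapsto k/(y-k+1)$ together with $s < \tfrac12$.
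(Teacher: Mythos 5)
Your proof is correct and follows essentially the same route as the paper's: bounding the ratio of consecutive binomial coefficients by $s/(1-s)$, summing the resulting geometric series to obtain the factor $\frac{1-s}{1-2s}$, applying Lemma \ref{lem-Estimate} to the top term, and handling the symmetric region by $k \mapsto y-k$. Your extra remark on absorbing the floor $\lfloor sy\rfloor$ into the error term is a point the paper passes over silently (it works with the $\Gamma$-function interpretation of $\binom{y}{sy}$), but it does not change the argument.
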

\begin{proof}
    Using the standard identities of $\Gamma$-function we notice that
    \begin{equation}
        \dfrac{\binom{y}{k-1}}{\binom{y}{k}}= \frac{k}{y-k+1} \leq \frac{s y}{y-sy+1} < \frac{s}{1-s} <1 \, . 
    \end{equation}
    Therefore, the sum is bounded by a geometric sum
    \begin{equation}
        \sum_{k=0}^{\lfloor sy \rfloor} \binom{y}{k} < \binom{y}{sy}  \Big(1 + \frac{s}{1-s} + (\frac{s}{1-s})^2 + \cdots \Big) =  \binom{y}{sy} \frac{1-s}{1-2s} \, . 
    \end{equation}
    When $y \rightarrow \infty$, we use Lemma \ref{lem-Estimate} to expand the binomial coefficient, which completes the proof of the lemma.

The region $k \in [(1-s)y,y]$ can be obtained by a change of variable $k \rightarrow y-k$ so the bound also holds.
\end{proof}

\begin{rmk} \label{rmk:tailEstimate}
    The method of proving Lemma \ref{lem:tailEstimate} can be generalized to products and/or ratios of binomial coefficients. It follows from Lemma \ref{lem-Estimate} that the dominant contributions of such products and/or ratios are of order $2^{y}/\sqrt{y}$. The tail contributions, on the contrary, are of order $(\exp H(s))^y/\sqrt{y}$. For $0<s<1$ the latter is exponentially smaller than the former and can be dropped from the summation.
\end{rmk}

Now we start to estimate the asymptotics of the numerators and denominators in equation \eqref{eq:ratio}. The results are presented in the following two propositions. \newline

    \begin{prop} \label{prop:numAsy}
        Let $\lambda = \frac{n}{g}$.  The asymptotics of the numerator of \eqref{eq:ratio} is given by
        \begin{equation} \label{eqn:numeratorSum}
            \sum\limits_{g_1=0}^{g} \binom{g}{g_1} \binom{3g-4+2n}{3g_1-2+n} = \frac{1}{\sqrt{\pi g (\lambda+6)}}2^{(2\lambda+4)g-4} \big(1 + o(1) \big)\, ,
        \end{equation}
        with the error term uniformly small in $n$ as $g \rightarrow \infty$.
    \end{prop}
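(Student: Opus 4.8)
The plan is to evaluate the sum by the Laplace (saddle-point) method, treating $g_1$ as a summation variable concentrated near a single peak. First I would use Lemma \ref{lem-Estimate} to rewrite each summand in the exponential form governed by the entropy function $H$. Writing $y = 3g-4+2n$ and recording that the second binomial $\binom{y}{3g_1-2+n}$ has ratio parameter $p_2 = (3g_1-2+n)/y$, the exponential part of the summand is $\exp\!\big(g\,H(g_1/g) + y\,H(p_2)\big)$, up to the polynomial prefactors supplied by Lemma \ref{lem-Estimate}.

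Next I would locate the maximizing index. Since $H(p)=H(1-p)$ is symmetric and concave with a unique maximum at $p=1/2$, and since the midpoint $y/2 = 3g/2-2+n$ of the lower index of the second binomial is attained exactly at $g_1 = g/2$, both entropy terms peak simultaneously there; being a sum of two concave functions, the exponent is concave, so $g_1=g/2$ is the unique global maximizer. Evaluating at the peak gives the exponential order $2^{g}\cdot 2^{y} = 2^{4g-4+2n}=2^{(2\lambda+4)g-4}$, which already produces the claimed leading power of $2$.

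The heart of the estimate is the Gaussian approximation around the peak. Setting $g_1 = g/2 + j$ and using $H(1/2+x)=\log 2 - 2x^2 + O(x^4)$ (the odd Taylor coefficients vanish by symmetry), the two quadratic contributions combine into $\exp(-\alpha j^2)$ with
\[
\alpha = \frac{2}{g} + \frac{18}{y} = \frac{4(6+\lambda)}{g(3+2\lambda)}\big(1+O(1/g)\big).
\]
Replacing the lattice sum $\sum_j e^{-\alpha j^2}$ by $\int e^{-\alpha x^2}\,dx = \sqrt{\pi/\alpha}$, and multiplying by the two Stirling prefactors $\sqrt{2/(\pi g)}$ and $\sqrt{2/(\pi y)}$ evaluated at $p=p_2=1/2$, the powers of $(3+2\lambda)$ cancel and one is left with $\frac{1}{\sqrt{\pi g(\lambda+6)}}\,2^{(2\lambda+4)g-4}$, exactly as asserted.

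Finally I would justify discarding everything outside a neighbourhood of the peak. The contribution of the indices near $g_1=0$ and $g_1=g$ is bounded by Lemma \ref{lem:tailEstimate} and Remark \ref{rmk:tailEstimate}, which show that, for the cutoff $s$ bounded away from $1/2$, the tail is exponentially smaller in $g$ than the central term, while the intermediate range is controlled by the Gaussian decay itself. The main obstacle is uniformity in $n$: the Stirling estimate of Lemma \ref{lem-Estimate} is uniform only for the ratio parameter in compact subsets of $(0,1)$, so I must check that as $\lambda=n/g\to\infty$ the peak stays at $p=p_2=1/2$ and the effective width in $j$ remains of order $\sqrt{g}$, so that $j/g$ and $3j/y$ stay $O(1/\sqrt g)$ and the quartic Taylor remainder, after multiplication by $g$ or $y$, is $O(1/g)$ uniformly. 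Verifying that these errors, together with the lattice-versus-integral correction, are genuinely uniform in $\lambda$ across $[0,\infty)$ is the delicate point, and is what upgrades the pointwise Laplace asymptotics to the required uniform statement.
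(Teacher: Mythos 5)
Your proposal is correct and follows essentially the same route as the paper: a Laplace-type analysis writing the summands via Lemma \ref{lem-Estimate} in terms of the entropy $H$, locating the peak at $g_1=g/2$, expanding quadratically to get the Gaussian weight with coefficient $\alpha=2/g+18/y=\tfrac{4(6+\lambda)}{g(3+2\lambda)}(1+O(1/g))$, approximating the sum by $\sqrt{\pi/\alpha}$, and discarding the tails via Lemma \ref{lem:tailEstimate} and Remark \ref{rmk:tailEstimate}. The only (harmless) difference is that you apply the Stirling estimate directly to the shifted binomial $\binom{3g-4+2n}{3g_1-2+n}$, whereas the paper first factors out the ratio $r_0$ to reduce to $\binom{(2\lambda+3)g}{\lambda g+3g_1}$; your uniformity discussion correctly identifies that $\alpha g$ stays bounded between fixed positive constants for all $\lambda\in[0,\infty)$, which is the point the paper's three-region decomposition is designed to handle.
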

    \begin{proof}
    For any $n$ both binomial coefficients obtain their maximal value at $g_1 = g/2$. In the large $g$ limit, the dominant contribution to the sum comes from the region where $g_1$ gets close to $g/2$. Make a change of variable
    \begin{equation*}
        \frac{g_1}{g} = \frac{1}{2}\Big(1 + \frac{x}{\sqrt{g}} \Big)\, .
    \end{equation*}
    As $g_1\in [0,g]$, $x \in [-\sqrt{g},\sqrt{g}]$.
    
    To simplify the expression, we first factor out some finite factors of the second binomial coefficient in \eqref{eqn:numeratorSum} as 
    \begin{equation}
    \label{eq:shift}
            \binom{3g-4+2n}{3g_1-2+n} = \binom{(2\lambda+3)g - 4}{\lambda g +3g_1-2} = \binom{(2\lambda+3)g}{\lambda g +3g_1} \cdot  r_0(g_1;g, \lambda) \, ,
    \end{equation}
        where
    \begin{equation} \label{eqn-num-r0}
        r_0 (g_1;g, \lambda) = \frac{\lambda g +3g_1 }{ (2\lambda+3)g } \cdot \frac{\lambda g +3g_1-1 }{(2\lambda+3)g-1 } \cdot \frac{ \lambda g +3(g-g_1)-1 }{ (2\lambda+3)g-2 } \cdot \frac{ \lambda g +3(g-g_1) }{(2\lambda+3)g-3 }.
    \end{equation}
The numerator of $r_0$ obtains its maximal at $g_1 = g/2$, because it is made of two quadratic polynomials 
\begin{equation*}
    \Big(\lambda g +3g_1\Big) \Big(\lambda g +3(g-g_1)\Big) \times \Big(\lambda g +3g_1-1\Big) \Big(\lambda g +3(g-g_1)-1\Big)
\end{equation*}
that both obtain the maximal value at that point. Therefore, we conclude that $r_0$ is bounded by the value at $g_1 = g/2$:
\begin{equation}
r_0 \leq r_0(\frac{g}{2};g,\lambda) = \frac{1}{32} \left(\frac{3}{g (2 \lambda +3)-3}+\frac{1}{2 g \lambda +3 g-1}+2\right) \, .
\end{equation}
Clearly the first two terms vanishes uniformly in the large $g$ limit, so we conclude
\begin{equation*}
    r_0 \leq \frac{1}{16} \Big( 1 + o(1) \Big) \, .
\end{equation*}

Our strategy is to divide the whole domain into the \textit{tail} region, where $x\in [-\sqrt{g},-(1-\varepsilon) \sqrt{g}] \cup [(1-\varepsilon) \sqrt{g},\sqrt{g}]$ and the rest, where $x \in [-(1-\varepsilon) \sqrt{g},(1-\varepsilon) \sqrt{g}]$. Recall that the variable $x$ is defined in the beginning of the proof as $g_1/g=1/2 (1+x/g) $. The constant $\varepsilon$ satisfies $0<\varepsilon <1$. The Lemma \ref{lem:tailEstimate} and the Remark \ref{rmk:tailEstimate} imply that the contribution of the tail region is negligible. 

We now focus on the region when $x \in [-(1-\varepsilon)\sqrt{g},(1-\varepsilon)\sqrt{g}]$, for any $0<\varepsilon<1$. For the binomial coefficient product, using Lemma \ref{lem-Estimate}, we find
    \begin{equation} \label{eqn-num-binomialAsy}
        \binom{g}{g_1} \binom{3g+2n}{3g_1+n} \sim \frac{1}{2\pi} R(g_1;g,n) \exp\left(g H\left(\frac{g_1}{g}\right) + (3g+2n) H\left(\frac{3g_1+n}{3g+2n}\right)\right)\, ,
    \end{equation}
    where
    \begin{equation}
        R(g_1;g,n) = \dfrac{1}{\sqrt{g(3g+2n)}} \dfrac{1}{\sqrt{\Big(\dfrac{g_1}{g} \Big)\Big(1-\dfrac{g_1}{g} \Big)}} \dfrac{1}{\sqrt{\Big(\dfrac{3g_1+n}{3g+2n}\Big) \Big(1-\dfrac{3g_1+n}{3g+2n} \Big)}} \, .
\end{equation}

Note that
\begin{equation*}
    \frac{3g_1+n}{3g+2n}=\frac{\lambda g+3g_1}{(2\lambda+3)g} = \frac{1}{2} \Big( 1 +  \frac{3 x}{(3+2\lambda)\sqrt{g}} \Big).
\end{equation*}
Thus the remainder term $R(g_1;g,n)$ reads
    \begin{equation}
        R(g_1;g,n) = \frac{4}{g \sqrt{2 \lambda +3}} \frac{1}{\sqrt{1-\frac{x^2}{g}}} \frac{1}{\sqrt{1-\frac{9x^2}{(3+2\lambda)^2 g}}}    
    \end{equation}
Together with the estimate of the product term $r_0$ in \eqref{eq:shift}, we see that the original product of binomials is bounded by
\begin{multline}\label{eqn-Num-EstimatePre}
    \binom{g}{g_1} \binom{3g-4+2n}{3g_1-2+n} \leq \frac{1}{2\pi}\frac{1}{4g \sqrt{2 \lambda +3}} \frac{1}{\sqrt{1-\frac{x^2}{g}}} \frac{1}{\sqrt{1-\frac{9x^2}{(3+2\lambda)^2 g}}} \\
    \times \exp\left[g H\Big(\frac{1}{2} + \frac{x}{2\sqrt{g}}\Big) + (3g+2n) H\Big(\frac{1}{2} + \frac{1}{2} \frac{3 x}{(3+2\lambda)\sqrt{g}} \Big)\right] \Big(1 +o(1) \Big)\, .
\end{multline}
Now we estimate the exponential term. Using the inequality \eqref{eqn-HinEquality}, we find
    \begin{equation} \label{eqn-Num-expUpper}
    \begin{aligned}  
        &g H\Big(\frac{1}{2} + \frac{x}{2\sqrt{g}}\Big) + (3g+2n) H\Big(\frac{1}{2} + \frac{1}{2} \frac{3 x}{(3+2\lambda)\sqrt{g}} \Big) \\
        \leq &\ g\Big(\log 2 - \frac{x^2}{2g} \Big) +(3g+2n) \left(\log 2 - \frac{1}{2}\Big( \frac{3 x}{(3+2\lambda)\sqrt{g}} \Big)^2 \right) \\
        = &\ g (2+\lambda)  \log 4-\frac{(6+\lambda) x^2}{3 +2 \lambda}\, .
    \end{aligned}   
    \end{equation}
Therefore, we have proven
\begin{equation} 
            \binom{g}{g_1} \binom{3g-4+2n}{3g_1-2+n} \leq \frac{1}{\sqrt{1-\frac{x^2}{g}}} \frac{1}{\sqrt{1-\frac{9x^2}{(3+2\lambda)^2 g}}} \frac{2^{(2\lambda +4)g-4}}{\sqrt{(2\lambda + 3)}  \frac{\pi g}{2}} e^{-(1+\frac{9}{2\lambda +3}) \frac{x^2}{2}} \, ,
        \end{equation}
when $x \in [-(1-\varepsilon)\sqrt{g},(1-\varepsilon)\sqrt{g}]$. In that regime the first two $x$-dependent prefactors increase when $x^2$ increases, thus are bounded by the value at $x^2 = (1-\varepsilon)^2 g$. Therefore,
\begin{equation}\label{eqn-Num-UpperBound}
    \begin{aligned}
        & \binom{g}{g_1} \binom{3g-4+2n}{3g_1-2+n} \leq \frac{1}{\sqrt{1-\frac{x^2}{g}}} \frac{1}{\sqrt{1-\frac{9x^2}{(3+2\lambda)^2 g}}} \frac{2^{(2\lambda +4)g-4}}{\sqrt{(2\lambda + 3)}  \frac{\pi g}{2}} e^{-(1+\frac{9}{2\lambda +3}) \frac{x^2}{2}}\, \\
        \leq &\ \frac{1}{\sqrt{1-(1-\varepsilon)^2}} \frac{1}{\sqrt{1-(1-\varepsilon)^2\frac{9}{(3+2\lambda)^2 }}} \frac{2^{(2\lambda +4)g-4}}{\sqrt{(2\lambda + 3)}  \frac{\pi g}{2}} e^{-(1+\frac{9}{2\lambda +3}) \frac{x^2}{2}}\, \\
        \leq &\ \frac{1}{1-(1-\varepsilon)^2} \frac{2^{(2\lambda +4)g-4} }{\sqrt{3}  \frac{\pi g}{2}} e^{- \frac{x^2}{2}}\, ,\\
    \end{aligned}
\end{equation}
where in the last step we substitute the upper bound for each $\lambda$-dependent term to get an uniform upper bound.

On the other hand, when $x \in [-g^{\frac{1}{4}-\delta}, \ g^{\frac{1}{4}-\delta}]$, ($0<\delta<1/4$) we can find a finer estimate. Recall that the $r_0$ term \eqref{eq:shift} is bounded by $\frac{1}{16} \big(1+o(1)\big)$, in this regime, we can prove it saturates this upper bound. We rewrite $r_0$ as
\begin{multline}
r_0 = 
\frac{1}{16} \frac{[(2\lambda+3)g]^4}{[(2\lambda+3)g][(2\lambda+3)g-1][(2\lambda+3)g-2][(2\lambda+3)g-3]} \\
\times \Big(1-\frac{9 x^2}{g (2 \lambda +3)^2} \Big) \Big(\frac{4-9 g x^2}{g^2 (2 \lambda +3)^2}-\frac{4}{2 g \lambda +3 g}+1 \Big)\, .
\end{multline}
In the large $g$ limit the terms above are of order $1/16+o(1)$ uniformly in $n$, because the $x$-dependent terms appears in the combination $x^2/g$, which is at most of order $O(g^{-\frac{1}{2}-2\delta})$.

For the remaining terms of the binomial product, starting from \eqref{eqn-num-binomialAsy} again in this region, we find the prefactor $R$ of \eqref{eqn-num-binomialAsy} uniformly asymptotes to $\frac{4}{g \sqrt{2 \lambda +3}}$:
\begin{equation}
\begin{aligned}
   \frac{4}{g \sqrt{2 \lambda +3}} \frac{1}{\sqrt{1-\frac{x^2}{g}}} \frac{1}{\sqrt{1-\frac{9x^2}{(3+2\lambda)^2 g}}} & = \frac{4}{g \sqrt{2 \lambda +3}} \left( 1 + O\Big(\frac{x^2}{g} \Big) \right) \Big(1 +o(1) \Big) \\ 
   & =   \frac{4}{g \sqrt{2 \lambda +3}} \Big( 1 + o(1)  \Big)\, , 
\end{aligned}
\end{equation}
and the exponential term also asymptotes to a quadratic function in $x$,
\begin{equation}
    \begin{aligned}
        & g H\Big(\frac{1}{2} + \frac{x}{2\sqrt{g}}\Big) + (3g+2n) H\Big(\frac{1}{2} + \frac{1}{2} \frac{3 x}{(3+2\lambda)\sqrt{g}} \Big) \\
        = &\ g(\log 2 - \frac{x^2}{2g} ) + O\left(\frac{x^4}{g} \right) +(3g+2n) \left(\log 2 - \frac{1}{2}\Big( \frac{3 x}{(3+2\lambda)\sqrt{g}} \Big)^2 \right) + O\left(\frac{x^4}{(3+2\lambda)^2 g} \right)\\
        =&\ g (2+\lambda)  \log 4-\frac{(6+\lambda) x^2}{3 +2 \lambda} + O\left(g^{-4\delta} \right) + O\left(\frac{g^{-4\delta}}{(3+2\lambda)^2}\right)\, .
    \end{aligned}   
\end{equation}
The Taylor expansions above are legitimate because comparing to $\frac{1}{2}$, both $\frac{x}{2\sqrt{g}} = O(g^{-\frac{1}{2}-2\delta})$ and $\frac{3 x}{(3+2\lambda)\sqrt{g}} = \frac{1}{3+2\lambda} O(g^{-\frac{1}{2}-2\delta})$ are small, no matter what value $\lambda$ takes. As the result of the quadratic expansion equals to the uniform upper bound in \eqref{eqn-Num-expUpper}, we conclude the difference is negligible.

Therefore, we have proven, for $x \in [-g^{\frac{1}{4}-\delta}, \ g^{\frac{1}{4}-\delta}]$,
\begin{equation} \label{eqn-Num-numDominant}
             \binom{g}{g_1} \binom{3g-4+2n}{3g_1-2+n} \sim 2^{(2\lambda +4)g-4} \frac{1}{\sqrt{(2\lambda + 3)}  \frac{\pi g}{2}} e^{-(1+\frac{9}{2\lambda +3}) \frac{x^2}{2}}\, .
\end{equation}

To summarize, the dominant contribution of the sum \eqref{eqn:numeratorSum} comes from the region $x \in [-g^{\frac{1}{4}-\delta}, \ g^{\frac{1}{4}-\delta}]$, with contribution given in \eqref{eqn-Num-numDominant}. The reason is that when $x$ is in the complement $ [-(1-\varepsilon)\sqrt{g},(1-\varepsilon)\sqrt{g}]\backslash[-g^{\frac{1}{4}-\delta}, \ g^{\frac{1}{4}-\delta}] $, the upper bound \eqref{eqn-Num-UpperBound} implies the contribution to the sum \eqref{eqn:numeratorSum} is at most of order $\exp(-g^{\frac{1}{2}-2\delta}) (\sqrt{g})^2$, with the first factor of $\sqrt{g}$ being the length of the interval, and the second factor of $\sqrt{g}$ being the proportionality constant between $g_1$ and $x$, $\sum_{g_1} = \sqrt{g} \sum_{x}$. Compared with the dominant contribution \eqref{eqn-Num-numDominant}, we see that the summation in the complement region is exponentially suppressed.

This dominant contribution to the sum \eqref{eqn:numeratorSum} in the region $x \in [-g^{\frac{1}{4}-\delta}, \ g^{\frac{1}{4}-\delta}]$ can now be approximated as an Gaussian integral over $x$, with Jacobian factor $\frac{\sqrt{g}}{2}$ from changing sum over $g_1$ into integral over $dx$,
    \begin{equation}
    \label{eq:asymp_num}
    \begin{aligned}
        \sum\limits_{g_1=0}^{g} \binom{g}{g_1} \binom{3g-4+2n}{3g_1-2+n} 
        & \sim 2^{(2\lambda +4)g-4} \frac{1}{\sqrt{(2\lambda + 3)}  \frac{\pi g}{2}} \frac{\sqrt{g}}{2} \int^{+\infty}_{-\infty }  e^{-(1+\frac{9}{2\lambda +3}) \frac{x^2}{2}} \, dx  \\
        & = \frac{1}{\sqrt{\pi g (\lambda+6)}}2^{(2\lambda+4)g-4}\, .
    \end{aligned}
    \end{equation}
\end{proof}

\begin{prop} \label{prop:dnoAsy}
        The asymptotics of the denominator of \eqref{eq:ratio} is given by
        \begin{equation} \label{eqn:denomSum}
     \sum\limits_{k=0}^{3g-4} \frac{\binom{3g-4+2n}{n+k} \binom{6g-6}{2k+1} }{\binom{3g-4}{k}} = \sqrt{\frac{3}{3+\lambda}} 2^{(2\lambda+6)g -7} \big( 1+o(1) \big)\, ,
        \end{equation}
        with the error term uniformly small in $n$ as $g \rightarrow \infty$.
    \end{prop}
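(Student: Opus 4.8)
The plan is to follow the strategy of Proposition \ref{prop:numAsy} verbatim, the new feature being that the summand is now a \emph{ratio} of three binomial coefficients rather than a product of two. Writing $k = cg$ and applying Lemma \ref{lem-Estimate} to each of $\binom{3g-4+2n}{n+k}$, $\binom{6g-6}{2k+1}$ and $\binom{3g-4}{k}$, the three argument-fractions tend to $\frac{\lambda+c}{2\lambda+3}$, $\frac{c}{3}$ and $\frac{c}{3}$ respectively, so the summand grows like $e^{g\Phi(c)}$ with
\begin{equation*}
\Phi(c) = (2\lambda+3)\,H\!\left(\frac{\lambda+c}{2\lambda+3}\right) + 3\,H\!\left(\frac{c}{3}\right),
\end{equation*}
where the $6gH$ coming from $\binom{6g-6}{2k+1}$ and the $-3gH$ coming from the denominator $\binom{3g-4}{k}$ combine into the single term $3H(c/3)$. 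Since $H'$ is strictly decreasing and the two arguments are increasing in $c$, the function $\Phi$ is strictly concave, and $\Phi'(c) = H'\!\big(\tfrac{\lambda+c}{2\lambda+3}\big) + H'\!\big(\tfrac c3\big) = 0$ forces $\tfrac{\lambda+c}{2\lambda+3} + \tfrac c3 = 1$, i.e. $c = \tfrac32$. At this point all three fractions equal $\tfrac12$ and $\Phi(\tfrac32) = (2\lambda+6)\log 2$, producing the leading factor $2^{(2\lambda+6)g}$.

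I would then center the sum at the peak through $\frac{k}{3g} = \frac12\big(1 + \frac{x}{\sqrt g}\big)$, under which the three fractions become $\frac12 + \frac12\frac{3x}{(2\lambda+3)\sqrt g}$, $\frac12 + \frac12\frac{x}{\sqrt g}$ and $\frac12 + \frac12\frac{x}{\sqrt g}$ up to the finite shifts $-4$, $-6$, $+1$, $-4$ in the tops and entries. Exactly as in \eqref{eq:shift}, these shifts factor into a ratio of finitely many linear terms which tends to $2^{-6}$ uniformly in $\lambda$ on the central region (the analogue of $r_0 \to 2^{-4}$; together with the $\tfrac12$ from the Jacobian this is what produces the final $2^{-7}$). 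Applying Lemma \ref{lem-Estimate} to the leading binomials, the prefactor collapses at $p_i = \tfrac12$ to $\frac{1}{\sqrt{\pi g(2\lambda+3)}}$, and on the finer window $x \in [-g^{1/4-\delta}, g^{1/4-\delta}]$ the quadratic Taylor expansion of $H$ (legitimate since the increments are $O(g^{-1/2})$ uniformly in $\lambda$) turns the exponent into
\begin{equation*}
g(\lambda+3)\log 4 - \frac{3(\lambda+3)}{2\lambda+3}\,x^2 ,
\end{equation*}
the coefficient arising from $\frac{9}{2(2\lambda+3)} + \frac32$. The inequality \eqref{eqn-HinEquality} supplies the matching upper bound off the finer window.

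The tail region is handled as in Proposition \ref{prop:numAsy}: Lemma \ref{lem:tailEstimate} and Remark \ref{rmk:tailEstimate}, which explicitly cover ratios of binomial coefficients, show that outside $x \in [-g^{1/4-\delta}, g^{1/4-\delta}]$ the contribution is exponentially suppressed because $\Phi(c) < \Phi(\tfrac32)$ strictly for $c \neq \tfrac32$. It then remains to convert the sum over $k$ into a Gaussian integral in $x$ with Jacobian $\frac{dk}{dx} = \frac{3\sqrt g}{2}$,
\begin{equation*}
\sum_{k=0}^{3g-4}\frac{\binom{3g-4+2n}{n+k}\binom{6g-6}{2k+1}}{\binom{3g-4}{k}} \sim 2^{(2\lambda+6)g-6}\,\frac{1}{\sqrt{\pi g(2\lambda+3)}}\,\frac{3\sqrt g}{2}\int_{-\infty}^{\infty} e^{-\frac{3(\lambda+3)}{2\lambda+3}x^2}\,dx ,
\end{equation*}
and to evaluate the Gaussian integral as $\sqrt{\frac{\pi(2\lambda+3)}{3(\lambda+3)}}$; the prefactor, Jacobian and integral combine to $\frac12\sqrt{\frac{3}{\lambda+3}}$, giving exactly $\sqrt{\frac{3}{3+\lambda}}\,2^{(2\lambda+6)g-7}$.

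I expect the main obstacle to be uniformity in $n$ as $\lambda = n/g \to \infty$. Because one binomial now sits in the denominator, every application of Lemma \ref{lem-Estimate} must be two-sided: the numerator binomials are controlled from above and the factor $\binom{3g-4}{k}$ from below, and all the $\cO(1/g)$ error terms must be shown to stay uniform as $\lambda$ grows without bound. The same concern governs the tail bound, where the suppression gap $\Phi(\tfrac32) - \Phi(c)$ shrinks as $\lambda \to \infty$; one must verify that the geometric-series estimate of Lemma \ref{lem:tailEstimate} still yields an exponentially small remainder uniformly in $\lambda$, which is precisely the content of the uniform-in-$n$ claim.
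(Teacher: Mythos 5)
Your proposal is correct and follows essentially the same route as the paper's proof: locate the common peak at $k\sim 3g/2$ (where all three argument-fractions equal $\tfrac12$), absorb the finite shifts into an $r_0$-type factor contributing $2^{-6}$, apply Lemma \ref{lem-Estimate} with the observation that the $H$-terms from $\binom{6g-6}{2k+1}$ and $\binom{3g-4}{k}$ share an argument and combine, control the tails via Lemma \ref{lem:tailEstimate}, Remark \ref{rmk:tailEstimate} and \eqref{eqn-HinEquality}, and evaluate the central region as a Gaussian integral; your exponent $\tfrac{3(\lambda+3)}{2\lambda+3}x^2$, prefactor and Jacobian agree with the paper's (which uses the equivalent normalization $x\mapsto x/\sqrt3$) and yield the same constant $\tfrac12\sqrt{3/(3+\lambda)}$. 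The uniformity-in-$\lambda$ issue you flag is real and is handled in the paper exactly as you anticipate, by bounding each $\lambda$-dependent factor by its worst-case value.
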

\begin{proof}
    Notice that all three binomial coefficients obtain their maximum at $k=\frac{3g-4}{2}$.
    We could naturally define
    \begin{equation*}
        \frac{k}{3g-4} = \frac{1}{2}\Big(1 + \frac{x}{\sqrt{3g-4}}\Big)\, ,
    \end{equation*} 
    and repeat the steps that lead to the previous proposition.

    As for the previous case, we divide the domain into the \textit{tail} region, where $x\in [-\sqrt{3g-4},-(1-\varepsilon) \sqrt{3g-4}] \cup [(1-\varepsilon) \sqrt{3g-4},\sqrt{3g-4}]$ and the rest, where $x \in [-(1-\varepsilon) \sqrt{3g-4},(1-\varepsilon) \sqrt{3g-4}]$. The constant $\varepsilon$ satisfies $0<\varepsilon <1$. Lemma \ref{lem:tailEstimate} and Remark \ref{rmk:tailEstimate} imply that the contribution of the tail region is negligible.

    To simplify the computation, we first express the second term in the numerator of \eqref{eqn:denomSum} as
    \begin{equation}
        \binom{6g-6}{2k+1} = \binom{6g-8}{2k} r_1(k;g)
    \end{equation}
    with
    \begin{equation}
        r_1(k;g) = \frac{6 (g-1) (6 g-7)}{(2 k+1) (6 g-2 k-7)} = \frac{6 (g-1) (6 g-7)}{9 (g-1)^2 - (3 g-4) x^2} \, .
    \end{equation}
    $r_1$ obtains its maximum value at the boundary $x^2 = (1-\varepsilon)\sqrt{3g-4}$:
    \begin{equation} \label{eqn-Den-r1Bound}
        r_1(k;g) \leq \frac{4}{\varepsilon (2-\varepsilon)} \left(1 + O\left(\frac{1}{g}\right) \right) \, .
    \end{equation}

    Using lemma \eqref{lem-Estimate}, we find
    \begin{equation}
    \begin{aligned}
       & \frac{\binom{3g-4+2n}{n+k} \binom{6g-8}{2k} }{\binom{3g-4}{k}} \sim R_2(k;g,\lambda) \\ 
       & \times \exp\Big[ (3g-4+2n) H\Big(\frac{1}{2}+\frac{\sqrt{3 g-4} x}{2g (2 \lambda +3)-8} \Big) + (3g-4) H\Big(\frac{1}{2} + \frac{x}{2\sqrt{3 g-4}}\Big) \Big] \, , 
    \end{aligned}
    \end{equation}
    where
    \begin{equation}
        R_2(k;g,\lambda) = \frac{1}{\sqrt{\pi}} \frac{1}{\sqrt{(3g-4) \left(1-\frac{x^2}{2 g \lambda +(3g-4)}\right)+2g \lambda }}\, .
    \end{equation}
    We only have two $H$ functions instead of three in the exponential, because the $\binom{6g-8}{2k}$ term in the numerator and the $\binom{3g-4}{k}$ term in the denominator can be expressed by $H$ with the same argument.
    
We find, for $x \in [-(1-\varepsilon)\sqrt{3g-4},(1-\varepsilon)\sqrt{3g-4}]$, the remainder term $R_2(k;g,\lambda)$ is bounded by the value at $x=(3g-4)(1-\varepsilon)^2$,
\begin{equation} \label{eqn-Den-R2Bound}
    R_2(k;g,\lambda) \leq \frac{1}{\sqrt{\pi}} \frac{1}{\sqrt{(3g-4) \left(1-\frac{(3g-4)(1-\varepsilon)^2}{2 g \lambda +(3g-4)}\right)+2g \lambda }} \leq \frac{1}{\sqrt{\pi(3g-4)}} \frac{1}{\sqrt{1-(1-\varepsilon)^2}}\, ,
\end{equation}
where in the last step we substitute the maximal value at $\lambda=0$ to obtain a uniform upper bound.

The exponential terms are bounded using \eqref{eqn-HinEquality},
\begin{equation} \label{eqn-Den-ExpHEstimate}
    \begin{aligned}
      & \exp\Big[ (3g-4+2n) H\Big(\frac{1}{2}+\frac{\sqrt{3 g-4} x}{2g (2 \lambda +3)-8} \Big) + (3g-4) H\Big(\frac{1}{2} + \frac{x}{2\sqrt{3 g-4}} \Big) \Big] \\
      \leq & 4^{(3+\lambda)g-4} \exp \Big( -\frac{x^2 (g (\lambda +3)-4)}{ (2 \lambda +3)g-4} \Big) \leq  4^{(3+\lambda)g-4} \exp \Big(- \frac{x^2}{2} \Big) \, . \\
\end{aligned}
\end{equation}
where we bound the coefficient of exponential as $1/2$. As one can verify that
\begin{equation}
    \frac{(g (\lambda +3)-4)}{ (2 \lambda +3)g-4} - \frac{1}{2} = \frac{3 g-4}{g (4 \lambda +6)-8} >0
\end{equation}
when $g>4/3$.

The results above show that the sum is uniformly bounded by
\begin{equation} \label{eqn-Den-UpperBound}
    \sum\limits_{k=0}^{3g-4} \frac{\binom{3g-4+2n}{n+k} \binom{6g-6}{2k+1} }{\binom{3g-4}{k}}  \leq \frac{\text{const}}{\sqrt{3g-4}} \cdot 4^{(3+\lambda)g} \exp \Big( - \frac{x^2}{2} \Big) \, .
\end{equation}
where the constant only depends on $\varepsilon$. The precise value of the constant can be found using \eqref{eqn-Den-r1Bound}, \eqref{eqn-Den-R2Bound}, and \eqref{eqn-Den-ExpHEstimate}, but it is not needed for our purposes.

Now we estimate the contributions to the sum in the region $x \in [-(3g-4)^{\frac{1}{4}-\delta}, \ (3g-4)^{\frac{1}{4}-\delta}]$, with $0<\delta<1/4$. In this region, a finer estimation can be made, since $x^2 = O(g^{\frac{1}{2}-2\delta})$. We find in this regime the term in the denominator of \eqref{eqn:denomSum} can be approximated as
  \begin{equation} \label{eqn:denomAsy1}
     \binom{3g-4}{k}= 2^{3g-4} \cdot e^{-\frac{x^2}{2}} \cdot \sqrt{\frac{2}{\pi(3g-4)}} \cdot \Big(1+ O(g^{-4\delta}) \Big)\sim 2^{3g-4} \cdot e^{-\frac{x^2}{2}} \cdot \sqrt{\frac{2}{3\pi g}} \, .
    \end{equation}
    The first term in the in the numerator of \eqref{eqn:denomSum} reads
    \begin{equation} \label{eqn:denomAsy3}
       \binom{3g-4+2n}{k+n} \sim 2^{(3+2\lambda) g -\frac{7}{2}} \sqrt{\frac{1}{\pi g(3+2\lambda)}} e^{- \frac{3}{3+2\lambda} \frac{x^2}{2}} \, ,
    \end{equation}
    while for the second term $\binom{6g-6}{2k+1} = r_1 \binom{6g-8}{2k}$ in the numerator of \eqref{eqn:denomSum}, we find
    \begin{equation} \label{eqn:denomAsy2}
        \binom{6g-8}{2k} \sim 2^{6g-8} \frac{1}{\sqrt{3\pi g}} e^{- x^2}\, , \quad  r_1 = 4 \Big(1 + O(\frac{x^2}{g} )\Big) =  4 \Big(1 + O(g^{-\frac{1}{2}-2\delta}) \Big)\, .
    \end{equation}
    Therefore, using \eqref{eqn:denomAsy1}, \eqref{eqn:denomAsy3} and \eqref{eqn:denomAsy2}, we conclude that when $x \in [-(3g-4)^{\frac{1}{4}-\delta}, \ (3g-4)^{\frac{1}{4}-\delta}]$,
    \begin{equation}
        \dfrac{\binom{3g-4+2n}{n+k} \binom{6g-6}{2k+1} }{\binom{3g-4}{k}} 
       \sim 2^{(2\lambda+6)g -6  } \frac{1}{ \sqrt{\pi (3+2\lambda)g }} e^{-\frac{3+\lambda}{3+2\lambda} x^2}\, .
    \end{equation}
    This estimate, combined with the uniform upper bound given in equation \eqref{eqn-Den-UpperBound}, suggests that the dominant contribution to the sum comes from the central region $x \in [-(3g-4)^{\frac{1}{4}-\delta}, \ (3g-4)^{\frac{1}{4}-\delta}]$, as similarly demonstrated in the proof of Proposition \ref{prop:numAsy}. Contributions outside the central region are exponentially suppressed compared to those within the region.

    We can now approximate the summation in the region $x \in [-(3g-4)^{\frac{1}{4}-\delta}, \ (3g-4)^{\frac{1}{4}-\delta}]$ as a Gaussian integral over $x$, with Jacobian $\frac{\sqrt{3g-4}}{2}$.
    \begin{equation}
    \label{eq:asymp_denom}
    \begin{aligned}
         \sum\limits_{k=0}^{3g-4} \dfrac{\binom{3g-4+2n}{n+k} \binom{6g-6}{2k+1} }{\binom{3g-4}{k}} 
        & \sim 2^{(2\lambda+6)g -6  } \frac{1}{ \sqrt{\pi (3+2\lambda)g }} \frac{\sqrt{3g-4}}{2} \int_{-\infty}^{+\infty} e^{-\frac{3+\lambda}{3+2\lambda} x^2} dx\\
        & = \sqrt{\frac{3g-4}{(3+\lambda)g}} 2^{(2\lambda+6)g -7} \sim \sqrt{\frac{3}{3+\lambda}} 2^{(2\lambda+6)g -7} \, .
    \end{aligned}
    \end{equation}
\end{proof}
Eventually, we have the following effortless proof for the main theorem:
\begin{proof}[Proof of Theorem~ \ref{thm:conjMain}]Direct calculation by Proposition \ref{prop:numAsy} and \ref{prop:dnoAsy}.
\end{proof}

\nocite{*}
%\printbibliography    
\bibliography{ref}% common bib file

%% BioMed_Central_Bib_Style_v1.01

\begin{thebibliography}{15}
% BibTex style file: bmc-mathphys.bst (version 2.1), 2014-07-24
\ifx \bisbn   \undefined \def \bisbn  #1{ISBN #1}\fi
\ifx \binits  \undefined \def \binits#1{#1}\fi
\ifx \bauthor  \undefined \def \bauthor#1{#1}\fi
\ifx \batitle  \undefined \def \batitle#1{#1}\fi
\ifx \bjtitle  \undefined \def \bjtitle#1{#1}\fi
\ifx \bvolume  \undefined \def \bvolume#1{\textbf{#1}}\fi
\ifx \byear  \undefined \def \byear#1{#1}\fi
\ifx \bissue  \undefined \def \bissue#1{#1}\fi
\ifx \bfpage  \undefined \def \bfpage#1{#1}\fi
\ifx \blpage  \undefined \def \blpage #1{#1}\fi
\ifx \burl  \undefined \def \burl#1{\textsf{#1}}\fi
\ifx \doiurl  \undefined \def \doiurl#1{\url{https://doi.org/#1}}\fi
\ifx \betal  \undefined \def \betal{\textit{et al.}}\fi
\ifx \binstitute  \undefined \def \binstitute#1{#1}\fi
\ifx \binstitutionaled  \undefined \def \binstitutionaled#1{#1}\fi
\ifx \bctitle  \undefined \def \bctitle#1{#1}\fi
\ifx \beditor  \undefined \def \beditor#1{#1}\fi
\ifx \bpublisher  \undefined \def \bpublisher#1{#1}\fi
\ifx \bbtitle  \undefined \def \bbtitle#1{#1}\fi
\ifx \bedition  \undefined \def \bedition#1{#1}\fi
\ifx \bseriesno  \undefined \def \bseriesno#1{#1}\fi
\ifx \blocation  \undefined \def \blocation#1{#1}\fi
\ifx \bsertitle  \undefined \def \bsertitle#1{#1}\fi
\ifx \bsnm \undefined \def \bsnm#1{#1}\fi
\ifx \bsuffix \undefined \def \bsuffix#1{#1}\fi
\ifx \bparticle \undefined \def \bparticle#1{#1}\fi
\ifx \barticle \undefined \def \barticle#1{#1}\fi
\bibcommenthead
\ifx \bconfdate \undefined \def \bconfdate #1{#1}\fi
\ifx \botherref \undefined \def \botherref #1{#1}\fi
\ifx \url \undefined \def \url#1{\textsf{#1}}\fi
\ifx \bchapter \undefined \def \bchapter#1{#1}\fi
\ifx \bbook \undefined \def \bbook#1{#1}\fi
\ifx \bcomment \undefined \def \bcomment#1{#1}\fi
\ifx \oauthor \undefined \def \oauthor#1{#1}\fi
\ifx \citeauthoryear \undefined \def \citeauthoryear#1{#1}\fi
\ifx \endbibitem  \undefined \def \endbibitem {}\fi
\ifx \bconflocation  \undefined \def \bconflocation#1{#1}\fi
\ifx \arxivurl  \undefined \def \arxivurl#1{\textsf{#1}}\fi
\csname PreBibitemsHook\endcsname

%%% 1
\bibitem[\protect\citeauthoryear{Mirzakhani}{2008}]{Mirzakhani:2008ola}
\begin{barticle}
\bauthor{\bsnm{Mirzakhani}, \binits{M.}}:
\batitle{{Growth of the number of simple closed geodesics on hyperbolic
  surfaces}}.
\bjtitle{Annals Math.}
\bvolume{168}(\bissue{1}),
\bfpage{97}--\blpage{125}
(\byear{2008})
\doiurl{10.4007/annals.2008.168.97}
\end{barticle}
\endbibitem

%%% 2
\bibitem[\protect\citeauthoryear{Delecroix et~al.}{2021}]{DGZZ21}
\begin{barticle}
\bauthor{\bsnm{Delecroix}, \binits{V.}},
\bauthor{\bsnm{Goujard}, \binits{E.}},
\bauthor{\bsnm{Zograf}, \binits{P.}},
\bauthor{\bsnm{Zorich}, \binits{A.}}:
\batitle{Masur-{V}eech volumes, frequencies of simple closed geodesics, and
  intersection numbers of moduli spaces of curves}.
\bjtitle{Duke Math. J.}
\bvolume{170}(\bissue{12}),
\bfpage{2633}--\blpage{2718}
(\byear{2021})
\doiurl{10.1215/00127094-2021-0054}
\end{barticle}
\endbibitem

%%% 3
\bibitem[\protect\citeauthoryear{{Delecroix} et~al.}{2023}]{dgzz23}
\begin{botherref}
\oauthor{\bsnm{{Delecroix}}, \binits{V.}},
\oauthor{\bsnm{{Goujard}}, \binits{E.}},
\oauthor{\bsnm{{Zograf}}, \binits{P.}},
\oauthor{\bsnm{{Zorich}}, \binits{A.}}:
{Higher genus meanders and Masur-Veech volumes}.
Preprint at \url{https://arxiv.org/abs/2304.02567}
(2023)
\end{botherref}
\endbibitem

%%% 4
\bibitem[\protect\citeauthoryear{Buser}{2010}]{Buser1992GeometryAS}
\begin{bbook}
\bauthor{\bsnm{Buser}, \binits{P.}}:
\bbtitle{Geometry and Spectra of Compact {R}iemann Surfaces}.
\bpublisher{Birkh\"{a}user Boston, Ltd.},
\blocation{Boston, MA}
(\byear{2010}).
\doiurl{10.1007/978-0-8176-4992-0}
\end{bbook}
\endbibitem

%%% 5
\bibitem[\protect\citeauthoryear{Aggarwal}{2021}]{aggarwal2021large}
\begin{barticle}
\bauthor{\bsnm{Aggarwal}, \binits{A.}}:
\batitle{Large genus asymptotics for intersection numbers and principal strata
  volumes of quadratic differentials}.
\bjtitle{Invent. Math.}
\bvolume{226}(\bissue{3}),
\bfpage{897}--\blpage{1010}
(\byear{2021})
\doiurl{10.1007/s00222-021-01059-9}
\end{barticle}
\endbibitem

%%% 6
\bibitem[\protect\citeauthoryear{Mirzakhani}{2013}]{Mirzakhani:2010pla}
\begin{barticle}
\bauthor{\bsnm{Mirzakhani}, \binits{M.}}:
\batitle{Growth of {W}eil-{P}etersson volumes and random hyperbolic surfaces of
  large genus}.
\bjtitle{J. Differential Geom.}
\bvolume{94}(\bissue{2}),
\bfpage{267}--\blpage{300}
(\byear{2013})
\doiurl{10.4007/annals.2008.168.97}
\end{barticle}
\endbibitem

%%% 7
\bibitem[\protect\citeauthoryear{Wu and Xue}{2022}]{Wu_2022}
\begin{barticle}
\bauthor{\bsnm{Wu}, \binits{Y.}},
\bauthor{\bsnm{Xue}, \binits{Y.}}:
\batitle{Random hyperbolic surfaces of large genus have first eigenvalues
  greater than {$\frac{3}{16}-\epsilon$}}.
\bjtitle{Geom. Funct. Anal.}
\bvolume{32}(\bissue{2}),
\bfpage{340}--\blpage{410}
(\byear{2022})
\doiurl{10.1007/s00039-022-00595-7}
\end{barticle}
\endbibitem

%%% 8
\bibitem[\protect\citeauthoryear{Lipnowski and
  Wright}{2024}]{lipnowski2023optimal}
\begin{barticle}
\bauthor{\bsnm{Lipnowski}, \binits{M.}},
\bauthor{\bsnm{Wright}, \binits{A.}}:
\batitle{Towards optimal spectral gaps in large genus}.
\bjtitle{Ann. Probab.}
\bvolume{52}(\bissue{2}),
\bfpage{545}--\blpage{575}
(\byear{2024})
\doiurl{10.1214/23-aop1657}
\end{barticle}
\endbibitem

%%% 9
\bibitem[\protect\citeauthoryear{{Anantharaman} and
  {Monk}}{2023}]{anantharaman2023friedmanramanujan}
\begin{botherref}
\oauthor{\bsnm{{Anantharaman}}, \binits{N.}},
\oauthor{\bsnm{{Monk}}, \binits{L.}}:
{Friedman-Ramanujan functions in random hyperbolic geometry and application to
  spectral gaps}.
Preprint at \url{https://arxiv.org/abs/2304.02678}
(2023)
\end{botherref}
\endbibitem

%%% 10
\bibitem[\protect\citeauthoryear{Hide}{2023}]{Hide2022}
\begin{botherref}
\oauthor{\bsnm{Hide}, \binits{W.}}:
Spectral gap for {W}eil-{P}etersson random surfaces with cusps.
Int. Math. Res. Not. IMRN
(20),
17411--17460
(2023)
\doiurl{10.1093/imrn/rnac293}
\end{botherref}
\endbibitem

%%% 11
\bibitem[\protect\citeauthoryear{{Shen} and {Wu}}{2022}]{shen2023arbitrarily}
\begin{botherref}
\oauthor{\bsnm{{Shen}}, \binits{Y.}},
\oauthor{\bsnm{{Wu}}, \binits{Y.}}:
{Arbitrarily small spectral gaps for random hyperbolic surfaces with many
  cusps}.
Preprint at \url{https://arxiv.org/abs/2203.15681}
(2022)
\end{botherref}
\endbibitem

%%% 12
\bibitem[\protect\citeauthoryear{Bender}{1991}]{graham1994concrete}
\begin{barticle}
\bauthor{\bsnm{Bender}, \binits{E.A.}}:
\batitle{Reviews: {C}oncrete {M}athematics: {A} {F}oundation for {C}omputer
  {S}cience}.
\bjtitle{Amer. Math. Monthly}
\bvolume{98}(\bissue{8}),
\bfpage{779}--\blpage{780}
(\byear{1991})
\doiurl{10.2307/2324448}
\end{barticle}
\endbibitem

%%% 13
\bibitem[\protect\citeauthoryear{Delecroix et~al.}{2022}]{delecroix2022large}
\begin{barticle}
\bauthor{\bsnm{Delecroix}, \binits{V.}},
\bauthor{\bsnm{Goujard}, \binits{E.}},
\bauthor{\bsnm{Zograf}, \binits{P.}},
\bauthor{\bsnm{Zorich}, \binits{A.}}:
\batitle{Large genus asymptotic geometry of random square-tiled surfaces and of
  random multicurves}.
\bjtitle{Invent. Math.}
\bvolume{230}(\bissue{1}),
\bfpage{123}--\blpage{224}
(\byear{2022})
\doiurl{10.1007/s00222-022-01123-y}
\end{barticle}
\endbibitem

%%% 14
\bibitem[\protect\citeauthoryear{{Hide} and {Thomas}}{2023}]{hide2023largen}
\begin{botherref}
\oauthor{\bsnm{{Hide}}, \binits{W.}},
\oauthor{\bsnm{{Thomas}}, \binits{J.}}:
{Large-$n$ asymptotics for Weil-Petersson volumes of moduli spaces of bordered
  hyperbolic surfaces}.
Preprint at \url{https://arxiv.org/abs/2312.11412}
(2023)
\end{botherref}
\endbibitem

%%% 15
\bibitem[\protect\citeauthoryear{{Budzinski}
  et~al.}{2023}]{budzinski2023distances}
\begin{botherref}
\oauthor{\bsnm{{Budzinski}}, \binits{T.}},
\oauthor{\bsnm{{Chapuy}}, \binits{G.}},
\oauthor{\bsnm{{Louf}}, \binits{B.}}:
{Distances and isoperimetric inequalities in random triangulations of high
  genus}
(2023).
Preprint at \url{https://arxiv.org/abs/2311.04005}
\end{botherref}
\endbibitem

\end{thebibliography}

\end{document}